\newtheorem{theorem}{Theorem}
\newtheorem{lemma}{Lemma}
\newtheorem{corollary}{Corollary}
\theoremstyle{definition}
\newtheorem{definition}{Definition}
\newtheorem{remark}{Remark}
\newtheorem{example}{Example}
\newtheorem{question}{Question}
\newcommand{\p}{{\mathbb P}}
\newcommand{\g}{{\mathbb G}}
\renewcommand{\L}{{\mathcal L}}
\renewcommand{\O}{{\mathcal O}}
\newcommand{\cd}{\operatorname{codim}}
\begin{document}
\title{A bound on the degree of schemes defined by quadratic equations}
\author[Alberto Alzati \and Jos\'{e} Carlos Sierra]{Alberto Alzati* \and Jos\'{e} Carlos Sierra**}
\address{Dipartimento di Matematica, Universit\`a degli Studi di Milano\\
via Cesare Saldini 50, 20133 Milano (Italy)}
\email{alberto.alzati@unimi.it}

\address{Instituto de Ciencias Matem\'aticas (ICMAT), Campus de
Cantoblanco, Carretera de Colmenar km.15, 28049 Madrid (Spain)}
\email{jcsierra@icmat.es}
\thanks{* This work is within the framework of the national research project
``Geometry on Algebraic Varieties'' Cofin 2006 of MIUR}

\thanks{**
Research partially supported by the Spanish projects MTM2006-04785
and MTM2009-06964, as well as by the mobility programs ``Profesores
de la UCM en el extranjero. Convocatoria 2008" and ``Jos\'e
Castillejo", MICINN grant JC2009-00098}
\date{\today}
\subjclass[2000]{Primary 14M99, 14N05; Secondary 14E05, 13D02}
\keywords{Varieties defined by quadrics, 2-Veronese embeddings,
apparent double points, syzygies}
\begin{abstract}
We consider complex projective schemes $X\subset\Bbb{P}^{r}$ defined
by quadratic equations and satisfying a technical hypothesis on the
fibres of the rational map associated to the linear system of
quadrics defining $X$. Our assumption is related to the syzygies of
the defining equations and, in parti\-cular, it is weaker than
properties $N_2$, $N_{2,2}$ and $K_2$. In this setting, we show that
the degree, $d$, of $X\subset\Bbb{P}^{r}$ is bounded by a function
of its codimension, $c$, whose asymptotic behaviour is given by
${2^c}/{\sqrt[4]{\pi c}}$, thus improving the obvious bound $d\leq
2^c$. More precisely, we get the bound
$\binom{d}{2}\leq\binom{2c-1}{c-1}$. Furthermore, if $X$ satis\-fies
property $N_p$ or $N_{2,p}$ we obtain the better bound
$\binom{d+2-p}{2}\leq\binom{2c+3-2p}{c+1-p}$. Some classification
results are also given when equality holds.


\end{abstract}
\maketitle

\section{Introduction}

The equations defining a projective variety (or scheme) and the
syzygies among them play a central role in algebraic geometry. From
this point of view, perhaps the most interesting case is that of
quadratic equations.

In modern terms, the study of varieties defined by quadratic
equations was initiated in Mumford's foundational paper \cite{m},
where it is proved that a multiple $|m\L|$ of any ample line bundle
$\L$ over an algebraic variety $X$ gives an embedding in a
projective space which is defined by quadrics if $m$ is big enough.
Moreover, effective values of $m$ were also obtained for curves and
abelian varieties. Let us look more closely at the case of curves. A
classical theorem of Castelnuovo \cite{c} (also attributed to
Mattuck \cite{mattuck} and Mumford \cite{m}) states that a curve $X$
of genus $g$ embedded in projective space by a complete linear
system $|\L|$ is projectively normal if $\deg(\L)\geq 2g+1$, and a
theorem due to Fujita \cite{f} and Saint-Donat \cite{sd},
strengthening earlier work of Mumford, asserts that the homogeneous
ideal of $X$ is generated by quadrics if $\deg(\L)\geq 2g+2$. These
results were generalized to syzygies by Green, proving that $X$
satisfies property $N_p$ if $\deg(\L)\geq 2g+1+p$ (see \cite{g}).
Since then, many efforts have been made to extend this type of
result on property $N_p$ to other varieties.

Bearing in mind that \emph{any} algebraic variety $X$ admits an
embedding into $\Bbb{P}^{r}$ such that its homogeneous ideal is
generated by quadrics, a natural question arises:

\begin{center}
\emph{What can be said about the degree of $X\subset\Bbb{P}^{r}$?}
\end{center}

The aim of this paper is to obtain a bound on the degree, $d$, of a
scheme $X\subset\Bbb{P}^{r}$ defined by quadrics in terms of its
codimension $c$. It is obvious that $d\leq 2^c$, with equality if
$X\subset\Bbb{P}^{r}$ is the complete intersection of $c$
independent quadric hypersurfaces. In this case, the number of the
defining equations is minimal with respect to $c$. On the other
side, Zak showed in \cite{z} that if $X\subset\Bbb{P}^{r}$ is either
a non-degenerate integral subvariety, or a finite set of points in
general position, and the number of independent quadratic equations
of $X$ is almost maximal with respect to $c$, then $d\leq 2c$ (cf.
Remark \ref{rem:zak}). But, to our best knowledge, very little is
known about the degree of $X\subset\Bbb{P}^{r}$ if the number of the
independent quadratic defining equations is neither minimal nor
maximal with respect to $c$ (cf. Remark \ref{rem:egh}).

However, our approach to this matter is a little bit different from
that of \cite{z}, even if we also use elementary techniques of
projective geometry that allow us to work in a very general setting:
let $\Lambda\subset H^{0}(\Bbb{P}^{r},\mathcal{O}_{\Bbb{P}^{r}}(2))$
be a linear subspace of dimension $\alpha +1$ and let
$X\subset\Bbb{P}^{r}$ denote the base scheme of $\Lambda$. Let
$\Phi:\Bbb{P}^{r}\backslash X\to \Bbb{P}^{\alpha}$ be the morphism
given by $\Lambda$. This map has been widely studied from many
points of view (see, for instance, \cite{v} and references therein).
Our results are obtained under an assumption involving the map
$\Phi$. We impose an upper bound on the dimension of the set
$W\subset G(1,r)$ consisting of lines $L\subset\Bbb{P}^{r}$ for
which the restriction $\Phi_{|L}:L\to\Phi(L)$ is a double covering
(see Lemma \ref{lemL} and Definition \ref{defW}). More precisely, we
assume $\dim(W)\leq 2n+1$, where $n:=\dim(X)$.

The main results of the paper are summarized in the following
theorem. First, we introduce some terminology. We say that $X$ is
\emph{reduced in codimension zero} if the scheme-theoretic
intersection of $X$ with a general linear subspace of $\Bbb{P}^{r}$
of codimension $n$ is reduced, and we say that $X$ is \emph{smooth
and integral in codimension one} if $n\geq 1$ and the
scheme-theoretic intersection of $X$ with a general linear subspace
of $\Bbb{P}^{r}$ of codimension $n-1$ is a smooth integral curve.

\begin{theorem}\label{thm:main}
Let $X\subset\Bbb{P}^{r}$ be a (possibly singular, non-reduced,
reducible or non-equidimensional) complex projective scheme of
degree $d$ and dimension $n$ defined by a linear system of quadrics
$\Lambda\subset H^{0}(\Bbb{P}^{r},\mathcal{O}_{\Bbb{P}^{r}}(2))$.
Assume that $X$ is reduced in codimension zero. Let $\alpha:=\dim
(\Lambda)-1$ and let $c:=r-n\geq 2$. Let $\Phi:\Bbb{P}^{r}\backslash
X\to \Bbb{P}^{\alpha}$ be the morphism given by $\Lambda $ and let
$W\subset G(1,r)$ be the closure of the set of lines
$L\subset\Bbb{P}^{r}$ for which the restriction
$\Phi_{|L}:L\to\Phi(L)$ is a double covering. The following holds:
\begin{itemize}
\item[(i)] if $\dim (W)\leq 2n+1$ then $\alpha \geq 2c-2$ and $\binom{d}{2}\leq \binom{2c-1}{c-1}$. Furthermore, if $\dim (W)\leq 2n$ then $\binom{d}{2}=\binom{2c-1}{c-1}$ if and only if $\alpha=2c-2$.
\item[(ii)] if $\dim (W)\leq 2n$, $\binom{d}{2}=\binom{2c-1}{c-1}$ and, moreover, $X$ is smooth and integral in codimension one then
either $d=3$, $c=2$ and $g=0$ or $d=5$, $c=3$ and $g=1$, where $g$
denotes the sectional genus of $X\subset\p^r$.
\end{itemize}
\end{theorem}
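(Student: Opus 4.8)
The plan is to prove everything by reducing to low-dimensional linear sections and then combining a degeneracy-locus estimate (which controls $\alpha$) with an independence argument on secant lines (which controls $d$). First I would observe that the hypotheses are stable under a general hyperplane section $H\cong\p^{r-1}$: the codimension $c$ and the degree $d$ are unchanged, the linear system restricts to $\Lambda|_H$, and a line $L\subset H$ is a double covering for $\Lambda|_H$ exactly when it is one for $\Lambda$, so $W_{X\cap H}=W\cap G(1,H)$ with $G(1,H)\subset G(1,r)$ of codimension $2$; hence a general section lowers $\dim W$ by $2$ and turns $\dim W\le 2n+1$ into $\dim W\le 2(n-1)+1$. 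Iterating $n$ times and using that $X$ is reduced in codimension zero, it suffices for (i) to treat $d$ reduced points in $\p^c$ with $\dim W\le 1$; iterating $n-1$ times and using that $X$ is smooth and integral in codimension one, it suffices for (ii) to treat a smooth integral nondegenerate curve in $\p^{c+1}$ with $\dim W\le 2$.

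For the inequality $\alpha\ge 2c-2$ I would argue on $G(1,r)$ with the evaluation map of vector bundles $\Lambda\otimes\O_{G(1,r)}\to\operatorname{Sym}^2\mathcal S^\vee$, whose target has rank $3$ ($\mathcal S$ the tautological subbundle). By Lemma~\ref{lemL}, $W$ is precisely the locus where this map has rank $\le 2$, i.e.\ a determinantal locus. By the standard lower bound on the dimension of degeneracy loci, every nonempty such locus has codimension at most $(3-2)\bigl((\alpha+1)-2\bigr)=\alpha-1$, so if $W\neq\emptyset$ then $\dim W\ge (2r-2)-(\alpha-1)=2r-1-\alpha$; combined with $\dim W\le 2n+1$ this gives $\alpha\ge 2c-2$. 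The case $W=\emptyset$ has to be disposed of separately (it forces the restricted system on a general line to have rank $3$ everywhere, which for a scheme cut out by $\Lambda$ again bounds $\alpha$ from below).

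The main obstacle is the degree bound $\binom d2\le\binom{2c-1}{c-1}$. Here I would use the factorization $\Phi=\pi\circ v_2$, where $v_2\colon\p^c\to\p^N$ is the $2$-Veronese and $\pi$ is the projection from the center $C$ with $v_2(X)=v_2(\p^c)\cap C$ equal to the $d$ points; then each of the $\binom d2$ secant lines of the points lies in $C$, and double-covering lines correspond to conic-planes meeting $C$ in a single point off the conic. Writing $\binom{2c-1}{c-1}=h^0(\p^c,\O_{\p^c}(c-1))$, the goal is to show that the $\binom d2$ secant lines impose independent conditions matching a space of forms of degree $c-1$. The key input is that $X$ is cut out by $\Lambda$ together with $\dim W\le 1$: the cut-out hypothesis forces a Cayley--Bacharach type constraint preventing a quadric of $\Lambda$ from containing a secant line without passing through its two endpoints, and $\dim W\le 1$ forbids the secant lines from degenerating into a positive-dimensional family of double-covering lines. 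Establishing this independence, and showing it becomes an equality of dimensions exactly when $\alpha=2c-2$, is the heart of the proof and the step I expect to be hardest; the refinement under $N_p$ or $N_{2,p}$ should follow by feeding the linear syzygies into the same count, yielding the stated $\binom{d+2-p}2\le\binom{2c+3-2p}{c+1-p}$.

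Finally, for (ii) I would first note that under $\dim W\le 2n$ the ``furthermore'' part of (i) forces $\alpha=2c-2$ in the equality case, so after reduction we face a smooth integral nondegenerate curve $X'\subset\p^{c+1}$ of degree $d$ and genus $g$ cut out by exactly $2c-1$ quadrics with $\binom d2=\binom{2c-1}{c-1}$. I would combine this numerical equality with the count of quadrics through $X'$ (via $h^0(\p^{c+1},\O(2))-h^0(X',\O_{X'}(2))$, using nondegeneracy and nonspeciality of $\O_{X'}(2)$) and Castelnuovo's genus bound to constrain $(d,g,c)$ to finitely many possibilities; the equality $\binom d2=\binom{2c-1}{c-1}$ is numerically attainable only in small codimension, and the curve-theoretic inequalities should eliminate all cases with $c\ge 4$, leaving $(d,c,g)=(3,2,0)$ and $(5,3,1)$. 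These are realized by the twisted cubic and the elliptic normal quintic, which I would identify as the unique extremal curves; the delicate point is ruling out $c\ge 4$, where one must show that only $2c-1$ quadrics together with $\dim W\le 2$ cannot coexist with the degree forced by the equality.
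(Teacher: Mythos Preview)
Your proposal has two genuine gaps that would make the argument fail.

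\textbf{The degeneracy-locus argument for $\alpha\ge 2c-2$ is wrong.} You claim that $W$ coincides with the locus in $G(1,r)$ where the evaluation map $\Lambda\otimes\O\to\operatorname{Sym}^2\mathcal S^\vee$ has rank $\le 2$, but Lemma~\ref{lemL} shows otherwise: every line meeting $X$ (cases (i)--(iii)) also restricts $\Lambda$ to a system of rank $\le 2$. The locus of lines meeting $X$ has dimension $n+r-1=2n+c-1\ge 2n+1$, so it dominates the degeneracy locus regardless of $\dim W$, and the determinantal bound only gives the trivial $\alpha\ge c$. Your separate treatment of $W=\emptyset$ is also unsubstantiated, and this is in fact the main case (e.g.\ whenever $K_2$ holds). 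The paper avoids this entirely: it gets $\alpha\ge 2c-2$ from the fact that the secant variety of $v_2(Y)$, for $Y$ a smooth $(c-1)$-quadric, has dimension $2c-1$ and is met by the center of projection $B=\langle v_2(X)\rangle$ only in finitely many points.

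\textbf{The degree bound is not proved.} The numerological identity $\binom{2c-1}{c-1}=h^0(\p^c,\O(c-1))$ does not translate into an argument; there is no mechanism by which ``secant lines impose independent conditions on forms of degree $c-1$'', and you acknowledge this is the step you expect to be hardest without indicating how it would go. The paper's mechanism is quite different and concrete: choose a smooth quadric $Y\subset A\cong\p^c$ disjoint from $X$, and count the double points $\eta(Y)$ of $\Phi_{|Y}$ two ways. On one side, $\dim W\le 2n+1$ forces all but finitely many such double points to come from secant lines to $X\cap A$, giving $\eta(Y)\ge\binom{d}{2}$. On the other side, $\eta(Y)\le\Delta[v_2(Y)]$, the number of apparent double points of the Veronese image of $Y$, and Corollary~\ref{corQ} computes $\Delta[v_2(Y)]=\binom{2c-1}{c-1}$ via the Peters--Simonis secant formula. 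The ``furthermore'' statement follows because under $\dim W\le 2n$ one has $\eta(Y)=\binom{d}{2}$ exactly, and $\eta(Y)=\Delta[v_2(Y)]$ iff $\operatorname{codim} B=\dim\operatorname{Sec}[v_2(Y)]$.

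For (ii) your outline is closer in spirit (you invoke Castelnuovo), but the paper does not use the quadric count $h^0(\I_{X'}(2))$ at all. Instead it reruns the double-point comparison with $Y$ now of codimension~2 in $A\cong\p^{c+1}$, and Lemma~\ref{lemsec} together with Theorem~\ref{formula} yields the second numerical relation $\binom{d-1}{2}-g=\binom{2c-1}{c-2}$. Combining this with Castelnuovo's bound gives $d\le\frac{3c^2+2c-1}{c-1}$, which forces $c\le 5$, and then the Diophantine constraint $\binom{d}{2}=\binom{2c-1}{c-1}$ leaves only $(d,c,g)=(3,2,0)$ and $(5,3,1)$. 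Without that second relation your finiteness step would not go through.
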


We remark that our hypothesis is quite general. In fact, the
assumption on $W$ is closely related to the syzygies of the defining
equations of $X$. For instance, if the trivial (or Koszul) relations
among the elements of $\Lambda$ are generated by linear syzygies,
then the closure of any fibre of $\Phi$ is a linear space and
$W=\emptyset$. This condition is called $K_2$ and it was introduced
by Vermeire in \cite{v}. Condition $K_2$ is weaker than the deeply
studied property $N_p$ defined in \cite{g} (see \cite{v}). In fact,
for any $p\geq 2$ we have
$$N_p\Rightarrow N_{2,p}\Rightarrow
K_2\Rightarrow W=\emptyset$$ (see \cite{eghp} for the definition and
results on property $N_{2,p}$). So, in particular, Theorem
\ref{thm:main} yields the following:

\begin{corollary}\label{cor:main}
Let $X\subset\Bbb{P}^{r}$ be a (possibly singular, non-reduced,
reducible or non-equidimensional) complex projective scheme of
degree $d$ and dimension $n$ defined by a linear system of quadrics
$\Lambda\subset H^{0}(\Bbb{P}^{r},\mathcal{O}_{\Bbb{P}^{r}}(2))$.
Assume that $X$ is reduced in codimension zero. Let $\alpha:=\dim
(\Lambda)-1$ and let $c:=r-n\geq 2$. If $X$ satisfies condition
$K_{2}$ then:
\begin{itemize}
\item[(i)] $\alpha \geq 2c-2$ and $\binom{d}{2}\leq \binom{2c-1}{c-1}$. Furthermore, $\binom{d}{2}=\binom{2c-1}{c-1}$ if and only if $\alpha=2c-2$.
\item[(ii)] if moreover $X$ is smooth and integral in codimension one and $\binom{d}{2}=\binom{2c-1}{c-1}$, then
either $d=3$, $c=2$ and $g=0$ or $d=5$, $c=3$ and $g=1$, where $g$
denotes the sectional genus of $X\subset\p^r$.
\end{itemize}
\end{corollary}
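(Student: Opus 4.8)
The plan is to obtain the Corollary as a direct consequence of Theorem \ref{thm:main}, so that the only substantive step is to show that condition $K_2$ forces $W=\emptyset$; once this is done, the dimension hypotheses on $W$ hold vacuously and both assertions drop out of the theorem.

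To see that $K_2 \Rightarrow W = \emptyset$, I would use the geometric reformulation of $K_2$ recalled in the text (and established in \cite{v}): under $K_2$ the closure of every fibre of $\Phi: \p^r \setminus X \to \p^\alpha$ is a linear subspace of $\p^r$. Suppose, to the contrary, that some line $L \subset \p^r$ belongs to $W$, i.e. that $\Phi_{|L}: L \to \Phi(L)$ is a double covering. Then a general fibre of $\Phi_{|L}$ consists of two distinct points $p_1, p_2 \in L$, and these lie in a common fibre $F = \Phi^{-1}(q)$ of the global map. Since $\overline{F}$ is a linear space containing the two distinct points $p_1$ and $p_2$, it contains the whole line $L = \overline{p_1 p_2}$; consequently $\Phi_{|L}$ is constant, so $\Phi(L)$ is a point rather than a curve, contradicting the hypothesis that $\Phi_{|L}$ is a double covering. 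Hence no such $L$ exists and $W = \emptyset$.

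With $W = \emptyset$ one has $\dim(W) = -\infty$, so in particular $\dim(W) \leq 2n$ and $\dim(W) \leq 2n+1$. Theorem \ref{thm:main}(i) then yields $\alpha \geq 2c-2$, the bound $\binom{d}{2} \leq \binom{2c-1}{c-1}$, and (via the second half of (i), which needs only $\dim(W) \leq 2n$) the characterization of equality by $\alpha = 2c-2$; this is precisely part (i) of the Corollary. Adding the hypothesis that $X$ is smooth and integral in codimension one, Theorem \ref{thm:main}(ii) gives the two numerical possibilities, which is part (ii). The only point needing care is the implication $K_2 \Rightarrow W = \emptyset$ above; beyond this the Corollary is a formal specialization of the main theorem, with no further obstacle.
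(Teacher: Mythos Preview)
Your proof is correct and follows essentially the paper's route: show $K_2\Rightarrow W=\emptyset$ and then invoke Theorem~\ref{thm:main}. The only nuance is that the paper (in Corollary~\ref{corK2}) derives $W=\emptyset$ by citing \cite[Lemma~4.2]{v} to say that $K_2$ restricts to any line $L$, so that $\Lambda_{|L}$ is the complete linear system when $L\cap X=\emptyset$ and case~(v) of Lemma~\ref{lemL} is excluded, whereas you argue via linearity of the fibres of $\Phi$---which is exactly Corollary~\ref{corlin}.
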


Thus, as an application of these results, we obtain some numerical
conditions for a given variety (or scheme) to satisfy properties
$N_2$, $N_{2,2}$ or $K_2$. For instance, as an immediate consequence
of Corollary \ref{cor:main}, we show in Example \ref{ex:8points}
that $8$ general points in $\Bbb{P}^{4}$ cannot satisfy property
$N_2$. This is an example where \cite[Theorem 1]{gl} is sharp for
$p=2$. Furthermore, recent work of Han and Kwak shows that if
$X\subset\p^r$ is a reduced scheme then property $N_{2,p}$ behaves
well under inner projections (see \cite{h-k}). Thanks to their
result, we can improve Corollary \ref{cor:main} in the following
way:

\begin{corollary}\label{cor:Np}
Let $X\subset\Bbb{P}^{r}$ be a (possibly singular, reducible or
non-equidimensional) reduced complex projective scheme of degree
$d$, dimension $n$ and codimension $c$ satisfying property $N_{p}$
or $N_{2,p}$ for some $p\geq 2$. Then:
\begin{itemize}
\item[(i)] $h^0(\Bbb{P}^{r},\mathcal{I}_X(2))\geq cp-\binom{p}{2}$ and
$\binom{d+2-p}{2}\leq \binom{2c+3-2p}{c+1-p}$. Furthermore,
$\binom{d+2-p}{2}=\binom{2c+3-2p}{c+1-p}$ if and only if
$h^0(\Bbb{P}^{r},\mathcal{I}_X(2))=cp-\binom{p}{2}$.
\item[(ii)] if moreover $X$ is smooth and integral in codimension one and $\binom{d+2-p}{2}=\binom{2c+3-2p}{c+1-p}$, then
either $d=p+1$, $c=p$ and $g=0$ or $d=p+3$, $c=p+1$ and $g=1$, where
$g$ denotes the sectional genus of $X\subset\p^r$.
\end{itemize}
\end{corollary}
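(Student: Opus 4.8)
The plan is to derive Corollary \ref{cor:Np} from Corollary \ref{cor:main} by using the result of Han and Kwak (\cite{h-k}) that property $N_{2,p}$ is preserved under inner projections from a smooth point. First I would recall that for reduced schemes we have the implications $N_p\Rightarrow N_{2,p}$, so it suffices to treat the case in which $X$ satisfies $N_{2,p}$. The key reduction is to project $X$ from $p-2$ general points lying on $X$. Each inner projection from a smooth point drops the dimension of the ambient space by one (hence keeps the codimension $c$ unchanged), decreases the degree by one, and, crucially, lowers the index in $N_{2,p}$ by one: by the Han--Kwak theorem, the image $X'$ of $X$ under projection from a general point satisfies $N_{2,p-1}$. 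Iterating this $p-2$ times produces a reduced scheme $X''\subset\p^{r-(p-2)}$ of degree $d-(p-2)=d+2-p$, codimension $c$, satisfying $N_{2,2}$, hence condition $K_2$. Applying Corollary \ref{cor:main}(i) to $X''$ yields $\binom{d+2-p}{2}\leq\binom{2c-1}{c-1}$; but I must still connect this to the claimed bound $\binom{2c+3-2p}{c+1-p}$, so the codimension bookkeeping will need refinement (see below).

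The apparent discrepancy forces a more careful accounting: the statement $\binom{d+2-p}{2}\le\binom{2c+3-2p}{c+1-p}$ suggests that \emph{both} the degree and the codimension decrease under the projections being used. I would therefore reconsider the projection: rather than projecting only from points of $X$, one should note that inner projection from a point of $X$ in fact decreases the codimension as well, since the projected scheme sits in a smaller projective space but the linear span drops correspondingly. Thus after $p-2$ inner projections one obtains a scheme of degree $d+2-p$ and codimension $c+2-p$, satisfying $K_2$. Feeding $c'=c+2-p$ into Corollary \ref{cor:main}(i) gives $\binom{d+2-p}{2}\le\binom{2(c+2-p)-1}{(c+2-p)-1}=\binom{2c+3-2p}{c+1-p}$, which is exactly the asserted inequality. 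For part (i) I would also track the number of quadrics: Corollary \ref{cor:main}(i) states equality holds iff $\alpha'=2c'-2$, i.e.\ $h^0(\mathcal I_{X''}(2))=2(c+2-p)-1=2c+3-2p$; translating this back through the projections to the original $X$, and using that each inner projection changes $h^0(\mathcal I(2))$ in a controlled way, should yield the equivalence with $h^0(\p^r,\mathcal I_X(2))=cp-\binom p2$ and the lower bound $h^0(\p^r,\mathcal I_X(2))\ge cp-\binom p2$.

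For part (ii), the hypothesis that $X$ is smooth and integral in codimension one is inherited by the projected scheme $X''$ (a general inner projection of a scheme that is smooth and integral in codimension one remains so, since the general curve section is projected isomorphically from a general point off it, up to controlling the finitely many secant lines), so Corollary \ref{cor:main}(ii) applies to $X''$ in the equality case. That corollary forces either $d''=3,\ c''=2,\ g=0$ or $d''=5,\ c''=3,\ g=1$. Undoing the projections via $d''=d+2-p$ and $c''=c+2-p$ while noting that the sectional genus is unchanged by general inner projection translates these into $d=p+1,\ c=p,\ g=0$ and $d=p+3,\ c=p+1,\ g=1$ respectively, as claimed.

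The main obstacle I anticipate is verifying rigorously that general inner projection decreases the codimension by exactly one (equivalently, that the projected scheme remains nondegenerate in the smaller $\p^{r-1}$ and that the general section hypotheses of Theorem \ref{thm:main}—reducedness in codimension zero, and smoothness/integrality in codimension one—are genuinely preserved). Equally delicate is the precise bookkeeping of $h^0(\mathcal I(2))$ under iterated inner projection, which is needed to obtain the sharp bound $h^0(\p^r,\mathcal I_X(2))\ge cp-\binom p2$ and its equality characterization; this is where the Han--Kwak machinery and the relationship between the quadrics through $X$ and those through its projection must be used most carefully.
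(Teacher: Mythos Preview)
Your proposal is correct and follows essentially the same route as the paper: iterate Han--Kwak inner projection $p-2$ times to reduce to a scheme of degree $d+2-p$ and codimension $c+2-p$ satisfying $N_{2,2}$, then apply Corollary~\ref{cor:main}. The obstacles you flag are exactly the ones the paper dispatches by citing specific results in \cite{h-k} (Corollaries~3.3, 3.7 and Proposition~3.5) for the $h^0(\mathcal I_X(2))$ bookkeeping, and by observing that a curve section through the projection point stays smooth because $X$ being cut out by quadrics forbids trisecant lines through that point.
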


The main idea of the proof of Theorem \ref{thm:main} is the
following. We choose a suitable smooth subvariety
$Y\subset\Bbb{P}^{r}$ of dimension $c-1$ and we estimate, \emph{in
two different ways}, the number of double points of the morphism
$\Phi_{|Y}:Y\to\Phi(Y)$. This clearly explains why we need to assume
$\dim(W)\leq 2n+1$. Otherwise, we cannot guarantee that the number
of double points of the morphism $\Phi_{|Y}:Y\to\Phi(Y)$ is finite.

The paper is structured as follows. In Section
\ref{section:notation} we fix notation. In Section
\ref{section:preliminary} we obtain two results, on the number of
common secant lines to a pair of smooth subvarieties
$X,Y\subset\Bbb{P}^{r}$ and on the number of apparent double points
of the $2$-Veronese embedding of a smooth subvariety
$Y\subset\Bbb{P}^{r}$, that we use in the sequel to compute the
number of double points of $\Phi_{|Y}$. Finally, in Section
\ref{section:main} we include the proofs of Theorem \ref{thm:main}
and Corollaries \ref{cor:main} and \ref{cor:Np}, as well as some
related examples and remarks. Theorem \ref{thm:main} and Corollary
\ref{cor:main} follow from Theorems \ref{teofond} and
\ref{teofondter}, and Corollaries \ref{corK2} and \ref{corfin},
respectively.

\section{Notation}\label{section:notation}

We work over the field of complex numbers. We will adopt the
following notation:

$\Bbb{P}^{r}$: $r$-dimensional projective space

$\Lambda$: linear subspace of
$H^{0}(\Bbb{P}^{r},\mathcal{O}_{\Bbb{P}^{r}}(2))$ generated by the
elements $f_{0},\dots,f_{\alpha}$

$\alpha$: $\dim (\Lambda )-1$


$X$: subscheme of $\Bbb{P}^{r}$ defined by $f_{0},\dots,f_{\alpha}$,
in brief: defined by $\Lambda$

$d$: degree of $X$ in $\Bbb{P}^{r}$

$n$: dimension of $X$

$c$: codimension of $X$ in $\Bbb{P}^{r}$

$g$: sectional genus of $X\subset\p^r$, if $X$ is smooth and
integral in codimension one

$\Phi$: rational map from $\Bbb{P}^{r}$ to
$\Bbb{P}^{\alpha}:=\Bbb{P}(\Lambda ^{*})$, induced by the linear
system $\Lambda$ in $\Bbb{P}^{r}$





$N(r):=h^{0}(\Bbb{P}^{r},\mathcal{O}_{\Bbb{P}^{r}}(2))-1$

$v_{2}$: Veronese map from $\Bbb{P}^{r}$ to $\Bbb{P}^{N(r)}$ given
by the complete linear system $|\mathcal{O}_{\Bbb{P}^{r}}(2)|$

$\langle Z\rangle $: linear span of a subvariety $Z$ embedded in a
projective space.


\section{Preliminary results}\label{section:preliminary}


Let us compute the number of common secant lines to smooth integral
subvarieties $X,Y\subset\Bbb{P}^{r}$ in terms of the number of
apparent double points of their respective linear sections. We prove
a suitable generalization of the formula \cite[p. 297]{gh} for two curves in
$\Bbb{P}^{3}$.

We say that $X,Y\subset\Bbb{P}^r$ are \emph{in general position} if
the set of common secant lines to $X$ and $Y$ has the expected
dimension.

\begin{lemma}\label{lemsec}
Let $X,Y$ be two smooth integral non-linear subvarieties in
$\Bbb{P}^{r}$ of dimension $h,k$ and degree $d,\delta$,
respectively, such that $h+k=r-1$. Assume that $X$ and $Y$ are in
general position in $\Bbb{P}^{r}$. For $i=0,\dots,h$, let $a_{i}$ be
the number of double points of a generic projection in
$\Bbb{P}^{2i}$ of a generic linear section of $X$ of dimension $i$
(for $i=h$ we consider $X$, and for $i=0$ we have
$a_{0}=\binom{d}{2}$). For $ i=0,\dots,k$, let $b_{i}$ be the number
of double points of a generic projection in $\Bbb{P}^{2i}$ of a
generic linear section of $Y$ of dimension $i$ (for $i=k$ we
consider $Y$, and for $i=0$ we have $b_{0}=\binom{\delta}{2}$). Then
the number of lines in $\Bbb{P}^{r}$ which are secant to both $X$
and $Y$ is $\sum\limits_{i=0}^{\min \{h,k\}}a_{i}b_{i}$.
\end{lemma}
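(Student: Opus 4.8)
The plan is to realize the common secant lines as the intersection, inside the Grassmannian $G(1,r)$, of the two \emph{varieties of secant lines}
$$\Sigma_X:=\overline{\{L\in G(1,r):\operatorname{length}(L\cap X)\geq 2\}},\qquad \Sigma_Y:=\overline{\{L\in G(1,r):\operatorname{length}(L\cap Y)\geq 2\}},$$
and to compute this intersection by Schubert calculus. Since $X$ is smooth, integral and non-linear of dimension $h$, the second symmetric product dominates $\Sigma_X$ generically one-to-one, so $\Sigma_X$ is irreducible of dimension $2h$; likewise $\dim\Sigma_Y=2k$. As $\dim G(1,r)=2(r-1)=2h+2k$, the two cycles have complementary dimension, and the general position hypothesis says precisely that $\Sigma_X\cap\Sigma_Y$ is finite. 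A standard argument using a general translate and Kleiman's theorem (via the transitive action of $PGL(r+1)$) then shows that the intersection is transverse and reduced, that a general point of it is an honest line meeting each of $X,Y$ in two distinct reduced points, and hence that the number we seek equals the intersection number $[\Sigma_X]\cdot[\Sigma_Y]$ evaluated on $G(1,r)$.

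The key step is to expand $[\Sigma_X]$ in the Schubert basis. Being of codimension $2k$, one writes $[\Sigma_X]=\sum_{a+b=2k}\lambda_{a,b}\,\sigma_{a,b}$, and the coefficient $\lambda_{a,b}$ is computed by intersecting with the complementary Schubert cycle $\sigma_{a,b}^{\vee}=\sigma_{r-1-b,\,r-1-a}$ supported on a general flag. I would show that only the classes $\sigma_{k+i,k-i}$ with $0\leq i\leq\min\{h,k\}$ occur, and that
$$\lambda_{k+i,k-i}=[\Sigma_X]\cdot\sigma_{h+i,h-i}=a_i.$$
Geometrically, intersecting $\Sigma_X$ with $\sigma_{h+i,h-i}$ imposes that the secant line lie in a general $\p^{k+i+1}$ and meet a general $\p^{k-i}$ inside it. The secants of $X$ contained in a general $\p^{k+i+1}$ are exactly the secants of the linear section $X\cap\p^{k+i+1}$, a smooth integral variety of dimension $i$, and requiring them to meet the general $\p^{k-i}$ is, after projecting to $\p^{2i}$ from that center, precisely the definition of the number $a_i$ of double points. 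The analogous computation gives $[\Sigma_Y]=\sum_{j}b_j\,\sigma_{h+j,h-j}$ with $0\leq j\leq\min\{h,k\}$.

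Finally I would multiply the two expansions and use Schubert duality on $G(1,r)$: since $\sigma_{k+i,k-i}^{\vee}=\sigma_{h+i,h-i}$, the product $\sigma_{k+i,k-i}\cdot\sigma_{h+j,h-j}$ equals the class of a point when $j=i$ and vanishes otherwise, whence
$$[\Sigma_X]\cdot[\Sigma_Y]=\sum_{i=0}^{\min\{h,k\}}a_i b_i,$$
which is the desired formula; the curve case $h=k=1$ reproduces the classical count $\binom{d}{2}\binom{\delta}{2}+a_1b_1$ of \cite{gh}. The main obstacle is the dimensional and transversality bookkeeping in the middle step: one must ensure that intersecting $\Sigma_X$ with the sub-Grassmannian of lines in a general $\p^{k+i+1}$ carries no excess, i.e. genuinely yields the secant variety of the linear section, of the expected dimension $2i$, and that the subsequent incidence condition is generically transverse, so that $\lambda_{k+i,k-i}$ is the reduced count $a_i$ rather than an excess contribution. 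This is exactly where the smoothness, non-linearity and general position hypotheses are used.
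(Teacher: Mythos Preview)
Your argument is essentially the same as the paper's: both compute the intersection of the two secant-line varieties in $G(1,r)$ via Schubert calculus, expand each class in the Schubert basis, and identify the coefficients geometrically as the double-point numbers $a_i$ (resp.\ $b_i$) by interpreting the dual Schubert condition as ``secants of a general linear section meeting a general center of projection''. The only differences are notational (the paper writes the Schubert cycles as $\Omega(p,q)$ rather than $\sigma_{a,b}$) and that you are somewhat more explicit about transversality and the absence of excess, invoking Kleiman where the paper simply appeals to general position.
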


\begin{proof}
Since $X,Y\subset\Bbb{P}^r$ are in general position and
$h+k=r-1$, we get a finite number $\varkappa $ of lines in
$\Bbb{P}^{r}$ which are secant to both $X$ and $Y$. We can assume
that $h\leq k.$ In this case we have to show that $\varkappa
=\sum\limits_{i=0}^{h}a_{i}b_{i}.$

Let $G$ be the Grassmannian $G(1,r)$ of lines of $\Bbb{P}^{r}$, $\dim
(G)=2r-2.$ The secant lines of $X$ give rise to a subvariety $S_{X}\subset G$
of dimension $2h.$ The secant lines of $Y$ give rise to a subvariety $%
S_{Y}\subset G$ of dimension $2k.$ Obviously $\varkappa =S_{X}S_{Y}$ in the
cohomology ring $H^{*}(G,\Bbb{Z})$ of $G.$ It is well known that $H^{*}(G,%
\Bbb{Z})$ is generated by the cohomology classes of the so-called
Schubert cycles $\Omega (p,q),$ where $\Omega (p,q)$ is the
subvariety of $G$ parametrizing the lines of $\Bbb{P}^{r}$ contained
in a generic linear
subspace $B$ of dimension $q$ and intersecting a generic linear subspace $%
A\subset B$ with $\dim (A)=p.$ As $\dim [\Omega (p,q)]=p+q-1$ we have
\begin{center}
$S_{X}=\sum\limits_{i=0}^{h}\alpha _{i}\Omega (i,2h+1-i)$

$S_{Y}=\sum\limits_{j=0}^{h}\beta _{j}\Omega (2k+1-r+j,r-j)$
\end{center}
for suitable $\alpha _{i}, \beta _{j}\in \Bbb{Z}.$ By recalling that
$h+k=r-1$ we can write
\begin{center}
$S_{Y}=\sum\limits_{j=0}^{h}\beta _{j}\Omega (r-(2h+1)+j,r-j).$
\end{center}
Now let us remark that $\Omega (i,2h+1-i)\Omega (r-(2h+1)+j,r-j)=\delta
_{j}^{i}$ (Kronecker symbols) so that $S_{X}S_{Y}=$ $\sum\limits_{i=0}^{h}%
\alpha _{i}\beta _{i}.$ Moreover:

$\alpha _{h}=S_{X}\Omega (r-(h+1),r-h)$ is the number of secant
lines to $X$ contained in a generic linear subspace
$B\subset\Bbb{P}^{r}$ of dimension $r-h$. As $\dim (X)=h$, this number is $\binom{d}{2}=a_{0}$.

$\alpha _{h-1}=$ $S_{X}\Omega (r-(h+2),r-h+1)$ is the number of
secant lines to $X$ contained in a generic linear subspace
$B\subset\Bbb{P}^{r}$ of dimension $r-h+1$ (cutting $X$ along a
generic section of dimension $1$) and intersecting a generic linear
subspace $A\subset B$ of dimension $r-h-2,$ i.e. it is the number of
double points of a generic projection in $\Bbb{P}^{2}$ of the curve
$X\cap B,$ hence $\alpha _{h-1}=a_{1}$. And so on until $\alpha_{0}=a_{h}$.

For $Y$ we can argue in the same way.
\end{proof}







We now obtain the number of apparent double points of the
$2$-Veronese embedding of a smooth subvariety $Y\subset\Bbb{P}^{r}$ in terms of
the number of apparent double points of the linear sections of $Y$.

\begin{theorem}\label{formula}
Let $Y$ be a smooth integral subvariety of $\Bbb{P} ^{r}$ of
dimension $k$ and degree $\delta$. Let $v_{2}(Y)$ be the
$2$-Veronese embedding of $Y$ in $\Bbb{P}^{N(r)}$, where
$N(r):=h^{0}(\Bbb{P}^{r},\mathcal{O}_{\Bbb{P}^{r}}(2))-1$. Let
$\Delta [v_{2}(Y)]$ be the number of double points of a generic
projection of $v_{2}(Y)$ in $\Bbb{P}^{2k}$. For $i=0,\dots,k$, let
$b_{i}$ be the number of double points of a generic projection in
$\Bbb{P}^{2i}$ of a generic linear section of $Y$ of dimension $i$
(for $i=k$ we consider $Y$, and for $i=0$ we have
$b_{0}=\binom{\delta}{2}$). Then

\begin{center}
$\Delta [v_{2}(Y)]=\sum\limits_{i=0}^{k}\binom{2k+1}{k-i}b_{i}.$
\end{center}
\end{theorem}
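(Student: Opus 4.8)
The plan is to reduce the count to a double-point (Segre class) computation on $\mathrm{Sym}^{2}Y$, and then to read off the coefficients $\binom{2k+1}{k-i}$ from the single fact that the embedding $v_{2}$ replaces the hyperplane class $H$ of $Y$ by $2H$. First I would reinterpret $\Delta[v_{2}(Y)]$ concretely. A generic projection of $v_{2}(Y)\subset\p^{N(r)}$ into $\p^{2k}$ is given by $2k+1$ general linear forms on $\p^{N(r)}$, and these pull back under $v_{2}$ to $2k+1$ general quadrics $Q_{0},\dots,Q_{2k}$ on $\p^{r}$. Thus the projection restricted to $Y$ is the map $\phi=(Q_{0}:\cdots:Q_{2k})\colon Y\to\p^{2k}$, and $\Delta[v_{2}(Y)]$ is the number of pairs $p\neq q$ in $Y$ for which the value vectors $(Q_{j}(p))_{j}$ and $(Q_{j}(q))_{j}$ are proportional, i.e.\ for which the secant line $\langle v_{2}(p),v_{2}(q)\rangle$ meets the centre of the projection. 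Each such secant lies in the plane $\langle v_{2}(\ell)\rangle$ spanned by the conic $v_{2}(\ell)$, where $\ell=\langle p,q\rangle$. (In contrast with Lemma \ref{lemsec}, here one counts secants meeting a fixed linear space rather than common secants of two varieties, so that result does not apply directly.)

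Next I would express this number as an intersection number. The secant lines of $v_{2}(Y)$ are parametrised by $\mathrm{Sym}^{2}Y$, of dimension $2k$, which carries the rank-$2$ secant bundle $E$ whose fibre over $\{p,q\}$ is $\langle v_{2}(p),v_{2}(q)\rangle$ (extending to the diagonal via the tangent lines, by smoothness of $Y$). The condition that a secant meet a general centre $\p^{N(r)-2k-1}$ is the pullback of the special Schubert class $\sigma_{2k}$ on $G(1,N(r))$, so that
\[ \Delta[v_{2}(Y)]=\int_{\mathrm{Sym}^{2}Y}s_{2k}(E). \]
Away from the diagonal, $E$ pulls back to $Y\times Y$ as $p_{1}^{*}\mathcal{O}_{Y}(-2)\oplus p_{2}^{*}\mathcal{O}_{Y}(-2)$, so the whole dependence on the embedding enters through $2H$. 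The crucial point is that the \emph{same} computation performed on a general linear section $Y_{i}\subset\p^{r-k+i}$ of dimension $i$ returns exactly $b_{i}$, but with $H$ in place of $2H$. Writing $2H=H+H$ and expanding by the binomial theorem, the contribution of $s_{2k}(E)$ reorganises as a sum over the sections $Y_{i}$: the $k-i$ ``extra'' copies of $H$ cut $Y$ down to $Y_{i}$ and can be distributed in $\binom{2k+1}{k-i}$ ways, while the residual invariant is precisely $b_{i}$. Summing yields $\Delta[v_{2}(Y)]=\sum_{i=0}^{k}\binom{2k+1}{k-i}b_{i}$.

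The main obstacle is the precise bookkeeping of these coefficients. In the Segre-class computation the diagonal of $\mathrm{Sym}^{2}Y$ supplies the corrections that turn the naive split-bundle answer into the invariants $b_{i}$ for $i<k$, and one must show that after the substitution $H\mapsto 2H$ these corrections assemble with total weight exactly $\binom{2k+1}{k-i}$ --- equivalently, that the governing homogeneity is $\dim\operatorname{Sec}(v_{2}(Y))=2k+1$ rather than $2k$. I would pin this down by the corresponding binomial identity, checking the normalisation first in the cases $k=0$ (where $\Delta=\binom{\delta}{2}=b_{0}$) and $k=1$ (where $\Delta[v_{2}(Y)]=\binom{2\delta-1}{2}-g=3\binom{\delta}{2}+b_{1}$). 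A parallel, more geometric route that I would use to guide the bookkeeping is to degenerate the general quadrics $Q_{0},\dots,Q_{2k}$ to quadrics adapted to a flag of general linear sections of $Y$; by conservation of number the double points then distribute over the sections $Y_{i}$, a configuration in which the line $\langle p,q\rangle$ is forced into a codimension-$(k-i)$ subspace contributing a double point of $Y_{i}$. Guaranteeing that no double points are lost to the diagonal or to excess loci, and that each stratum carries exactly the multiplicity $\binom{2k+1}{k-i}$, is the delicate part of the argument.
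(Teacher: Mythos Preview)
Your strategy is the same as the paper's: both reduce to a double-point formula in terms of Segre classes and then exploit the fact that $v_{2}$ replaces the hyperplane class $H$ by $2H$. The paper makes this precise by invoking the Peters--Simonis formula
\[
b_{k-q}=\tfrac{1}{2}\Bigl(\delta^{2}-\sum_{i}\tbinom{2(k-q)+1}{k-q-i}H^{k-q-i}s_{i}(Y_{k-q})\Bigr),
\qquad
\Delta[v_{2}(Y)]=\tfrac{1}{2}\Bigl(2^{2k}\delta^{2}-\sum_{i}\tbinom{2k+1}{k-i}(2H)^{k-i}s_{i}\Bigr),
\]
together with the Segre-class relation $s_{i}(Y_{k-q})=\sum_{j}\binom{q}{j}H^{q+j}s_{i-j}$ for a ladder of hyperplane sections, and then solves the resulting triangular linear system for the coefficients $a_{i}$.

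The gap in your write-up is precisely the step you flag as ``the main obstacle''. The sentence ``writing $2H=H+H$ and expanding by the binomial theorem, the $k-i$ extra copies of $H$ can be distributed in $\binom{2k+1}{k-i}$ ways'' is a heuristic, not a computation: the double-point formula is \emph{not} simply $(2H)^{2k+1}$ on $\mathrm{Sym}^{2}Y$, so there is no single homogeneous polynomial in $H$ to which one can apply the binomial theorem. What actually has to be checked is the identity
\[
\tbinom{2k+1}{k-i}\tbinom{k-i}{q}=\tbinom{2k+1}{q}\tbinom{2k-q+1}{k-q-i},
\]
which is exactly what equates the coefficient of $H^{k-i}s_{i}$ on both sides once the Peters--Simonis expressions and the Segre relation $(*)$ are fed in. Your checks for $k=0,1$ and the degeneration sketch do not substitute for this; and your alternative route via degenerating the $Q_{j}$ to a flag would, as you yourself note, require a separate excess/multiplicity analysis that is at least as involved. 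To turn your outline into a proof, write down the explicit double-point formula you are using (your $\int_{\mathrm{Sym}^{2}Y}s_{2k}(E)$ must be unpacked into $\delta^{2}$ and the $H^{k-i}s_{i}$ anyway), and then verify the binomial identity above directly.
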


\begin{proof}
If $k=0$ we have nothing to prove. If $k\geq 1,$ let $Y=Y_{k}\supset
Y_{k-1}\supset \dots\supset Y_{1}\supset Y_{0}$ be a ladder of
smooth hyperplane sections with $\dim (Y_{k-q})=k-q$,
$q=0,1,\dots,k$. Let $H$ be the class of the hyperplane divisor of
$Y$. Let $\delta :=H^{k}$ be the degree of $Y$. First of all we need
a formula for the Segre classes of any $Y_{k-q}$. Let $s_{i}$ be the
$i$-th Segre class of $Y,$ then we have

\begin{center}
$s_{i}(Y_{k-q})=\sum\limits_{j=0}^{q}\binom{q}{j}H^{q+j}s_{i-j}$ (with $%
s_{i-j}=0$ if $i<j$) $\qquad (*).$
\end{center}

By using \cite[Theorem 3.4]{ps} we have
$$
\begin{array}{lcl}

b_{k} & = & \frac{1}{2}\left( \delta ^{2}-\sum\limits_{i=0}^{k}\binom{2k+1}{k-i}%
H^{k-i}s_{i}\right) ;\\

b_{k-1} & = & \frac{1}{2}\left( \delta ^{2}-\sum\limits_{i=0}^{k-1}\binom{2(k-1)+1%
}{k-1-i}H^{k-1-i}s_{i}(Y_{k-1})\right) ;\\


\vdots & & \\

b_{1} & = & \frac{1}{2}\left( \delta ^{2}-\sum\limits_{i=0}^{1}\binom{3}{1-i}%
H^{1-i}s_{i}(Y_{1})\right) ;\\

b_{0} & = & \frac{1}{2}\left( \delta ^{2}-H^{k}s_{0}\right)\, =\,
\binom{\delta}{2} .
\end{array}
$$

On the other hand, the number of the double points of a generic
projection of $v_{2}(Y)$ in $\Bbb{P}^{2k}$ is

\begin{center}
$\Delta [v_{2}(Y)]$ $=\frac{1}{2}\left( 2^{2k}\delta
^{2}-\sum\limits_{i=0}^{k}\binom{2k+1}{k-i}2^{k-i}H^{k-i}s_{i}\right) .\;$
\end{center}

Let us suppose that this number is equal to
$a_{k}b_{k}+a_{k-1}b_{k-1}+\dots+a_{1}b_{1}+a_{0}b_{0}$ for a
suitable choice of $a_{i}\;$and let us try to find $a_{i.}$ For
instance, by considering the coefficients of $\delta ^{2}$ we get

\begin{center}
$a_{k}+a_{k-1}+\dots+a_{1}+a_{0}$ $=2^{2k}$ \qquad $(**).$
\end{center}

Moreover we must have

\begin{center}
$\sum\limits_{i=0}^{k}\binom{2k+1}{k-i}2^{k-i}H^{k-i}s_{i}=\sum%
\limits_{i=0}^{k}a_{k-i}b_{k-i}^{\prime }$ \qquad $(***),$
\end{center}
where $b_{k-i}^{\prime }:=\delta ^{2}-2b_{k-i}.$

If we consider the coefficients of $H^{k-i}s_{i},$ $i=0,1,\dots,k,$
in $(***),$ taking care of the relations $(*),$ we get a system of
$k+1$ linear equations in the $k+1$ unknowns $a_{i},$
$i=0,1,\dots,k,$ whose associated matrix is triangular, with a
non-zero determinant.

We claim that the only solution of this system of linear equations
is $a_{i}=\binom{2k+1}{k-i}$, $i=0,1,\dots,k$. To see this fact we
transform $(***)$ into

\begin{center}
$\sum\limits_{i=0}^{k}\binom{2k+1}{k-i}(2^{k-i}-1)H^{k-i}s_{i}=\sum%
\limits_{i=1}^{k}a_{k-i}b_{k-i}^{\prime }$
\end{center}
and we write (by using the relations $(*)$)

\begin{center}
$\sum\limits_{i=0}^{k}\binom{2k+1}{k-i}(2^{k-i}-1)H^{k-i}s_{i}=\sum%
\limits_{q=1}^{k}\sum\limits_{i=0}^{k}\binom{2k+1}{k-i}\binom{k-i}{q}%
H^{k-i}s_{i}$ $\quad (i)$

$\sum\limits_{i=1}^{k}\binom{2k+1}{i}b_{k-i}^{\prime
}=\sum\limits_{q=1}^{k}\sum\limits_{p=0}^{k-q}\sum\limits_{j=0}^{q}\binom{%
2k+1}{q}\binom{2(k-q)+1}{k-q-p}\binom{q}{j}H^{k-(p-j)}s_{p-j}$ $\quad (ii)$
\end{center}
where $s_{p-j}=0$ when $p<j.$ Now we fix any $q=1,2,\dots,k$ and we
look at the coefficients of $H^{k-i}s_{i},$ $i=0,1,\dots,k.$

In $(i)$ the coefficient of $H^{k-i}s_{i}$ is
$\binom{2k+1}{k-i}\binom{k-i}{q}$. In $(ii)$ it is

\begin{center}
$\binom{2k+1}{q}\sum\limits_{j=0}^{\min (q,k-q-i)}\binom{2(k-q)+1}{k-q-i-j}%
\binom{q}{j}=\binom{2k+1}{q}\binom{2(k-q)+1+q}{k-q-i}$,
\end{center}
where the last equality is the well-known formula
$\sum\limits_{j=0}^{\min
(a,b)}\binom{m}{b-j}\binom{a}{j}=\binom{m+a}{b}$.

Now it is easy to check that

\begin{center}
$\binom{2k+1}{k-i}\binom{k-i}{q}=\binom{2k+1}{q}\binom{2k-q+1}{k-q-i}.$
\end{center}

Therefore $a_{i}=\binom{2k+1}{k-i},$ with $i=0,1,\dots,k$, is the
only solution of $(***)$ and obviously it is a solution for $(**)$
too.
\end{proof}

In particular, the following computation will be very useful for our purposes.

\begin{corollary}\label{corQ}
Let $Y$ be a smooth $k$-dimensional quadric of $\Bbb{P}^{r}$. Let
$v_{2}(Y)$ be the $2$-Veronese embedding of $Y$ in $\Bbb{P}^{N(r)}$.
Then $\Delta [v_{2}(Y)]=\binom{2k+1}{k}$.
\end{corollary}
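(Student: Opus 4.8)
The plan is to apply Theorem~\ref{formula} directly to the case where $Y$ is a smooth $k$-dimensional quadric, since Corollary~\ref{corQ} is exactly this specialization. The key simplification is that for a quadric, every generic linear section is again a smooth quadric of one lower dimension, and all the relevant double-point numbers $b_i$ vanish in a uniform way.

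First I would observe that a generic linear section of $Y$ of dimension $i$ is a smooth $i$-dimensional quadric $Q_i\subset\Bbb{P}^{i+1}$. For $i\geq 1$ such a quadric is embedded by a complete linear system of minimal degree (it is a variety of minimal degree, in fact a smooth quadric hypersurface in its span $\Bbb{P}^{i+1}$). The crucial point is that a generic projection of $Q_i$ to $\Bbb{P}^{2i}$ has no double points for every $i\geq 1$: when $i\geq 1$ we have $2i\geq i+1$, so the quadric already lives in a projective space of dimension $\le 2i$, the generic projection is an isomorphism onto its image, and hence $b_i=0$ for $i=1,\dots,k$. The only surviving term is $b_0$, the number of double points of a generic projection of the degree-$\delta$ zero-dimensional section into $\Bbb{P}^0$, i.e. $b_0=\binom{\delta}{2}=\binom{2}{2}=1$ since $\delta=2$.

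With these values in hand, the formula of Theorem~\ref{formula} collapses to a single nonzero summand:
\begin{center}
$\Delta[v_2(Y)]=\sum_{i=0}^{k}\binom{2k+1}{k-i}b_i=\binom{2k+1}{k-0}\cdot b_0=\binom{2k+1}{k}\cdot 1=\binom{2k+1}{k}.$
\end{center}
This gives the claimed value immediately.

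The main obstacle to make rigorous is the vanishing claim $b_i=0$ for $1\le i\le k$, and in particular handling the degenerate case $i=1$ where $\Bbb{P}^{2i}=\Bbb{P}^2$ and the section is a smooth conic. One must check carefully that a smooth conic (and more generally a smooth $i$-dimensional quadric in $\Bbb{P}^{i+1}$ with $i\ge 1$) admits a generic projection into $\Bbb{P}^{2i}$ that is injective on points, so that no genuine double points arise. Since the ambient dimension $i+1$ never exceeds $2i$ for $i\ge 1$, no projection is actually required and the identity map already realizes $b_i=0$; the only subtle boundary value is $i=1$, where $i+1=2=2i$, and the conic sits in $\Bbb{P}^2$ with no projection needed, so again $b_1=0$. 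Thus the argument reduces to the elementary observation that embedded varieties of minimal degree in low codimension have no apparent double points under generic projection to their own span dimension, after which the binomial identity is purely formal.
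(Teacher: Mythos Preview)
Your proposal is correct and follows essentially the same approach as the paper: apply Theorem~\ref{formula}, observe that for a quadric $b_0=\binom{2}{2}=1$ while $b_i=0$ for $i\geq 1$ since an $i$-dimensional quadric spans only a $\Bbb{P}^{i+1}$ and hence has no apparent double points under a generic projection to $\Bbb{P}^{2i}$, and conclude $\Delta[v_2(Y)]=\binom{2k+1}{k}$. The paper's proof is identical in substance, stated in one line; your version simply spells out the dimension count $i+1\leq 2i$ that justifies the vanishing of $b_i$ for $i\geq 1$.
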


\begin{proof}
Let us apply Theorem \ref{formula}: in this case $b_{0}=1$ and $b_{i}=0$ for
$i=1,\dots,k$ because the generic $i$-dimensional linear section of $Y$ has no apparent double points.
\end{proof}

\section{Main results}\label{section:main}

Let $\Phi:\Bbb P^r\dashrightarrow\Bbb P^{\alpha}$ be the rational map given by a linear system of quadrics $\Lambda$.
Let us analyze the restriction of $\Phi$ to a line $L\subset\Bbb{P}^{r}$.

\begin{lemma}
\label{lemL} Let $\Lambda $ be a linear system of quadrics in
$\Bbb{P}^{r}$, let $\Phi $ be the associated rational map and let
$X$ be the scheme defined by $\Lambda$. Consider the restriction
$\Phi _{|L}$ of the rational map $\Phi$ to a line $L$ of $\Bbb P^r$.
Then one of the following holds:

$(i)$ $L$ is contained in $X$ and $\Phi_{|L}$ is not defined;

$(ii)$ $L$ is a secant line for $X$ and $\Phi_{|L}$ contracts $L$ to
a point;

$(iii)$ $L$ intersects $X$ at one point and $\Phi_{|L}$ can be
extended to an isomorphism among $L$ and $\Phi(L)$;

$(iv)$ $L\cap X=\emptyset $ and $\Phi_{|L}$ is a Veronese embedding
of $L$;

$(v)$ $L\cap X=\emptyset $ and $\Phi_{|L}$ is a double covering of
$\Phi(L)$ by $L$.
\end{lemma}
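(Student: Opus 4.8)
The plan is to analyze the restriction of the quadratic map $\Phi$ to a line $L$ by understanding how the quadrics in $\Lambda$ restrict to $L$. The key observation is that restricting a quadric $f \in \Lambda$ to a line $L \cong \p^1$ yields a binary quadratic form (a section of $\O_{\p^1}(2)$), which is either identically zero or has degree $\leq 2$. So the restricted linear system $\Lambda_{|L}$ lives in $H^0(\p^1, \O_{\p^1}(2))$, a $3$-dimensional space, and the behaviour of $\Phi_{|L}$ is governed entirely by the image of the restriction map $\rho_L : \Lambda \to H^0(L, \O_L(2))$.

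First I would stratify according to $\dim \im(\rho_L) \in \{0,1,2,3\}$, which corresponds to the number of independent binary forms cut out on $L$. If every quadric of $\Lambda$ vanishes identically on $L$, then $L \subset X$ and $\Phi_{|L}$ is undefined: this is case $(i)$. If $\dim \im(\rho_L) = 1$, all restricted forms are proportional, so $\Phi_{|L}$ is constant where defined, meaning $L$ is contracted to a point; one then checks that a single nonzero binary quadric vanishes at two points (counted with multiplicity) of $L$, so $L$ meets $X$ in a length-$2$ subscheme and is a secant line — this is case $(ii)$. The remaining cases have $\dim\im(\rho_L) \geq 2$, so $\Phi_{|L} : L \to \p^{\alpha}$ is a genuine nonconstant morphism onto a curve (after resolving base points).

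Next I would treat the nonconstant cases by examining the base locus $L \cap X$, i.e.\ the common zeros on $L$ of the restricted forms, together with the degree of the image. When $\dim\im(\rho_L) = 2$, the pencil of binary quadrics has either one or zero common base points. If there is a common zero, say a point $p$, then all forms in the pencil share the factor corresponding to $p$; dividing it out, $\Phi_{|L}$ is given by a base-point-free pencil of \emph{linear} forms (after accounting for the common root), so $\Phi_{|L}$ is a degree-one map extending to an isomorphism onto a line $\Phi(L)$, which is case $(iii)$ — here $L$ meets $X$ in exactly the one point $p$. If the pencil has no common zero, then $\Phi_{|L}$ is base-point-free of degree $2$ onto $\p^1$, hence a double cover, and $L \cap X = \emptyset$; this is case $(v)$. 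Finally, when $\dim\im(\rho_L) = 3$ the full system $|\O_{\p^1}(2)|$ is cut out, so $\Phi_{|L}$ is the complete $2$-Veronese embedding of $L$ into $\Phi(L) \subset \p^\alpha$, a conic, again with $L \cap X = \emptyset$; this is case $(iv)$.

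The main thing to verify carefully is that the intersection-theoretic claims about $L \cap X$ are consistent with each case, and that the two base-point-free subcases of $\dim\im(\rho_L)=2$ (isomorphism versus double cover) are genuinely distinguished by the presence or absence of a common root — rather than, say, both restricted forms degenerating to lower degree. In particular I must confirm that in case $(iii)$ the scheme-theoretic intersection $L \cap X$ has length exactly one, and in cases $(iv)$, $(v)$ it is empty, using that a binary form of degree $2$ is determined by its roots. The subtle point, and the step I expect to be the main obstacle, is ruling out unexpected degenerations when some restricted quadrics drop to degree $\leq 1$ on $L$: I would argue that the classification above is exhaustive because $H^0(L,\O_L(2))$ has dimension exactly $3$ and the map type is completely determined by $\im(\rho_L)$ as a linear subsystem of $|\O_{\p^1}(2)|$, whose possible base loci and image degrees are elementary to enumerate.
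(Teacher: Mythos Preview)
Your proposal is correct and follows essentially the same approach as the paper: both arguments reduce to analyzing the restriction of $\Lambda$ to $L$ as a linear subsystem of $H^0(\O_{\p^1}(2))$. The paper carries this out in explicit coordinates $(x:y)$ on $L$, writing the restricted quadrics as $\lambda_i x^2+\mu_i xy+\nu_i y^2$ and reading off each case directly, while you organize the same analysis by $\dim\im(\rho_L)\in\{0,1,2,3\}$; these are equivalent formulations of the same elementary case split, and your concern about ``degenerations when some restricted quadrics drop to degree $\leq 1$'' is unnecessary, since such forms are simply elements of $H^0(\O_L(2))$ and are already absorbed into the description of $\im(\rho_L)$.
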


\begin{proof}
Let us choose coordinates $(x:y)$ on $L$ and let us consider that
$\Phi _{|L}$ is given by a linear system of quadrics on
$\Bbb{P}^{1}$ of the following type: $\lambda
_{i}x^{2}+\mu_{i}xy+\nu_{i} y^{2}$, $i=0,\dots,\alpha$. In case
$(i)$ all quadrics vanish identically. In case $(ii)$, let $(1:0)$
and $(0:1)$ be the coordinates of the two points $X\cap L$. All
quadrics are of the following type: $\mu _{i}xy,$ $
i=0,\dots,\alpha$, so that $\Phi(L)=(\mu _{0}:\dots:\mu_{\alpha})$.
In case $(iii)$, let $(1:0)$ be the coordinates of $X\cap L$. All
quadrics are of the following type: $\mu _{i}xy+\nu_{i}
y^{2}=(\mu_{i}x+\nu_{i} y)y$, $i=0,\dots,\alpha$, so that
$\Phi_{|L}$ can be extended to the isomorphism given by:
$\mu_{i}x+\nu_{i} y$, $i=0,\dots,\alpha$. In cases $(iv)$ and $(v)$,
$ \Phi_{|L}$ is a morphism given by a base point free linear system
of degree two on $L$. If the linear system is complete, then
$\Phi_{|L}$ is a Veronese embedding of $\Bbb P^1$. Otherwise, it is a double
covering of $\Bbb P^1$.
\end{proof}

\begin{lemma}\label{lemW}
In the setting of Lemma \ref{lemL}, a line $L$ such
that $L\cap X=\emptyset $ yields case $(v)$ if and only if it is a
secant line to a non-linear fibre of $\Phi$.
\end{lemma}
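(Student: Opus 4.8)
The plan is to prove both implications directly from Lemma \ref{lemL}, exploiting the fact that for a line $L$ with $L\cap X=\emptyset$ the restriction $\Phi_{|L}$ is a degree-two morphism which, by that lemma, is either a Veronese embedding (case $(iv)$) or a double covering (case $(v)$). Everything will turn on the dichotomy \emph{embedding} versus \emph{non-injective or ramified}, translated into the language of fibres of $\Phi$. Recall that for $P\notin X$ the point $\Phi(P)$ is determined by the direction of $(f_0(P):\dots:f_\alpha(P))$, so a fibre $F:=\overline{\Phi^{-1}(Q)}$ is precisely the locus where $\Phi$ takes the constant value $Q$.

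For the implication case $(v)\Rightarrow$ secant to a non-linear fibre, I would start from a double covering $\Phi_{|L}\colon L\to\Phi(L)$. Since $L\cong\Bbb P^1\cong\Phi(L)$, a generic $Q\in\Phi(L)$ has two distinct preimages $P_1,P_2\in L$ (by Riemann--Hurwitz the cover ramifies at only two points). Both lie in $F=\overline{\Phi^{-1}(Q)}$, so $L$ is secant to $F$, and the only thing left to check is that $F$ is non-linear: were $F$ a linear subspace, then $P_1,P_2\in F$ would force $L=\langle P_1,P_2\rangle\subset F$, making $\Phi_{|L}\equiv Q$ constant and contradicting that it is a double covering.

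For the converse, secant to a non-linear fibre $F\Rightarrow$ case $(v)$, I would take $L$ meeting $F$ at two points $P_1,P_2$ with $\Phi(P_1)=\Phi(P_2)=Q$, and rule out case $(iv)$. In case $(iv)$, $\Phi_{|L}$ is an isomorphism onto its image and in particular injective, which is incompatible with $\Phi(P_1)=\Phi(P_2)$ when $P_1\neq P_2$. In the limiting situation where $L$ meets $F$ in a length-two scheme at a single point $P$, the tangent direction of $L$ lies in $T_PF$, along which $\Phi$ is constant, so $d\Phi_P$ annihilates it and $\Phi_{|L}$ fails to be an immersion, again excluding an embedding. Hence only case $(v)$ survives.

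The argument is essentially formal, and the single point needing care is the word \emph{non-linear}: I would stress that, for a line disjoint from $X$, being secant to \emph{any} fibre already forces that fibre to be non-linear, because a linear fibre meeting $L$ in two points would contain all of $L$ and make $\Phi_{|L}$ constant, impossible in cases $(iv)$--$(v)$. Thus the adjective records an automatic consequence rather than an extra hypothesis, which is exactly what makes the two implications match. The main (mild) obstacle will be the careful treatment of the tangential secant in the converse, where one must upgrade set-theoretic non-injectivity to the infinitesimal statement that $\Phi_{|L}$ is not an immersion.
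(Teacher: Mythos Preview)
Your proof is correct and follows essentially the same approach as the paper's: both directions hinge on the dichotomy from Lemma~\ref{lemL} between embedding and double cover, together with the observation that a linear fibre meeting $L$ twice would force $L\subset F$ and hence $\Phi_{|L}$ constant. Your treatment of the tangential secant (via the differential $d\Phi_P$) is more explicit than the paper's, which simply remarks that the argument also applies when $Q=Q'$; otherwise the two proofs are the same.
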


\begin{proof}
Let $\Phi_{P}:=\overline{\Phi^{-1}(\Phi(P))}$ be a non-linear fibre
of $\Phi$ for some point $P\in \Bbb{P}^{r}\backslash X.$ Let $Q$,
$Q'$ be any two points of $\Phi_{P}$ and let $L$ be the line
$\langle Q,Q'\rangle $. Let us assume that $L\cap X=\emptyset$ and
let us consider $\Phi_{|L}$. As $\Phi(Q)=\Phi(Q')$, $\Phi_{|L}$
cannot be an embedding so that case $(v)$ holds. Note that this is
also true when $Q=Q^{\prime}$, i.e. for tangent lines to $\Phi_{P}$.

On the other hand, let $L$ be a line in $\Bbb{P}^{r}$ such that
$L\cap X=\emptyset $ and for which case $(v)$ holds. Let $Q,Q'\in L$
be two points such that $\Phi(Q)=\Phi(Q')$. These points belong to
some fibre $\Phi_{P}$ that cannot be a linear space, otherwise $L$
would be contained in $\Phi_{P}$ and case $(ii)$ would hold. Hence
$L$ is a secant line for $\Phi_{P}$.
\end{proof}

\begin{definition}\label{defW}
In the setting of Lemma \ref{lemL}, let us consider the set
$\mathcal{W}\subset G(1,r)$ consisting of lines $L$ in $\Bbb{P}^{r}$
for which case $(v)$ holds. Let $W$ be the Zariski closure of
$\mathcal{W}$.
\end{definition}

\begin{corollary}
\label{corlin} $W=\emptyset$ if and only if all fibres of $\Phi $
are linear spaces.
\end{corollary}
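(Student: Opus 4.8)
The plan is to prove Corollary \ref{corlin} directly from Lemma \ref{lemW} together with Definition \ref{defW}, analyzing what it means for $W$ (equivalently $\mathcal{W}$) to be empty in light of the fibre geometry of $\Phi$.

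First I would establish the easy direction: if all fibres of $\Phi$ are linear spaces, then $\mathcal{W}=\emptyset$, hence $W=\emptyset$. Indeed, suppose some line $L$ with $L\cap X=\emptyset$ were in $\mathcal{W}$, i.e. case $(v)$ of Lemma \ref{lemL} holds for $L$. By Lemma \ref{lemW}, such an $L$ must be a secant line to a \emph{non-linear} fibre of $\Phi$. But by hypothesis every fibre is linear, so no non-linear fibre exists, and therefore no such $L$ can occur. Thus $\mathcal{W}=\emptyset$, and taking Zariski closure gives $W=\emptyset$.

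For the converse, I would argue the contrapositive: if some fibre $\Phi_P$ is not a linear space, I must produce a line in $\mathcal{W}$. Since $\Phi_P$ is non-linear, it is not contained in any line, so it contains at least two distinct points $Q,Q'$ spanning a genuine secant line $L=\langle Q,Q'\rangle$ with $\Phi(Q)=\Phi(Q')$. The point requiring care is ensuring we may choose $Q,Q'$ so that $L\cap X=\emptyset$; once that holds, Lemma \ref{lemW} immediately places $L$ in $\mathcal{W}$, giving $W\neq\emptyset$. I expect this disjointness to be the main obstacle, but it should follow from a genericity argument: $X$ is a proper closed subscheme (the base locus of $\Lambda$), a general point $P\in\Bbb{P}^r\backslash X$ has its fibre $\Phi_P$ disjoint from $X$ away from the base locus, and for a non-linear fibre one can select a general secant line avoiding the codimension-$c$ scheme $X$. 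Alternatively, one notes that the set of secant lines meeting $X$ is a proper subvariety of the secant variety of $\Phi_P$, so a general secant line has empty intersection with $X$.

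Combining the two directions yields the biconditional $W=\emptyset \iff$ all fibres of $\Phi$ are linear spaces. The essential input is entirely contained in Lemma \ref{lemW}, which characterizes case $(v)$ lines (those defining $\mathcal{W}$) precisely as the secant lines to non-linear fibres; the corollary is then the logical restatement that $\mathcal{W}$ is empty exactly when no non-linear fibre exists. The only genuinely substantive step beyond invoking Lemma \ref{lemW} is the genericity argument guaranteeing a secant line disjoint from $X$, and I would keep that brief since $X$ has positive codimension $c\geq 2$ in the ambient space.
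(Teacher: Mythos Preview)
Your approach matches the paper's: the authors' entire proof reads ``This is just a reformulation of Lemma~\ref{lemW}.'' You are right to flag the disjointness of $L$ from $X$ as the one point needing care, but your proposed genericity/codimension justification does not quite work as written: the hypothesis only supplies \emph{some} non-linear fibre $\Phi_P$, not a general one, and the secant lines to $\Phi_P$ form a low-dimensional family in $G(1,r)$ which a bare codimension count on $X$ does not prevent from lying entirely inside the locus of lines meeting $X$ (remember $X$ and $\Phi_P$ are not in general position---both are determined by $\Lambda$). The clean fix is a dichotomy already implicit in Lemma~\ref{lemL}: pick distinct $Q,Q'\in\Phi^{-1}(\Phi(P))\subset\Phi_P\setminus X$ and set $L=\langle Q,Q'\rangle$; since $\Phi(Q)=\Phi(Q')$, cases $(iii)$ and $(iv)$ are excluded, so either $L\cap X=\emptyset$ and $L\in\mathcal{W}$, or $L$ is secant to $X$, case $(ii)$ applies, $\Phi$ contracts $L$ to the point $\Phi(P)$, and hence $L\subset\Phi_P$. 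If \emph{every} such secant line were contained in $\Phi_P$ then $\Phi_P$ (being the closure of $\Phi^{-1}(\Phi(P))$) would be a linear space, contrary to assumption; so some $L$ lies in $\mathcal{W}$ and $W\neq\emptyset$.
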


\begin{proof}
This is just a reformulation of Lemma \ref{lemW}.
%
%
\end{proof}




Now we can prove the main results of the paper:

\begin{theorem}
\label{teofond} Let $X\subset\Bbb{P}^{r}$ be a scheme of degree $d$
and dimension $n$ defined by a linear system of quadrics $\Lambda$.
Assume that $X$ is reduced in codimension zero. Let
$\alpha:=\dim(\Lambda)-1$ and let $c:=r-n\geq 2$. If $\dim(W)\leq
2n+1$, then $\alpha \geq 2c-2$ and $\binom{d}{2}\leq
\binom{2c-1}{c-1}$. Furthermore, if $\dim(W)\leq
2n$ then $\binom{d}{2}=\binom{2c-1}{c-1}$ if
and only if $\alpha=2c-2$.
\end{theorem}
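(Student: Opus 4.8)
The plan is to reduce the problem to a double-point count on a cleverly chosen auxiliary variety and then invoke the two preliminary results (Lemma \ref{lemsec} and Corollary \ref{corQ}). The strategy announced in the introduction is to pick a smooth subvariety $Y\subset\Bbb{P}^{r}$ of dimension $c-1$ and estimate the number of double points of $\Phi_{|Y}$ in two different ways. The natural choice for $Y$ is a general smooth quadric of dimension $k=c-1$, because Corollary \ref{corQ} gives us an exact count $\Delta[v_{2}(Y)]=\binom{2k+1}{k}=\binom{2c-1}{c-1}$ for the apparent double points of its $2$-Veronese image. Since $\Phi$ is given by a subsystem $\Lambda$ of the complete quadric system, the map $\Phi_{|Y}$ factors through $v_{2}(Y)$ followed by a linear projection, so its double points are among the apparent double points of $v_{2}(Y)$; this should yield the inequality $\binom{d}{2}\leq\binom{2c-1}{c-1}$ once we identify the double points of $\Phi_{|Y}$ with the quantity $\binom{d}{2}$ coming from the geometry of $X$.

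The key bridge is the hypothesis $\dim(W)\leq 2n+1$. By Lemma \ref{lemW}, a line $L$ disjoint from $X$ on which $\Phi_{|L}$ is a double covering is precisely a secant line to a non-linear fibre of $\Phi$. Choosing $Y$ general of dimension $c-1$, a double point of $\Phi_{|Y}$ corresponds to a secant line of $Y$ that either meets $X$ twice (giving case $(ii)$ of Lemma \ref{lemL}) or lies in $W$. Since $\dim W\leq 2n+1$ and $Y$ is general of the complementary dimension $c-1=r-n-1$, a dimension count $\dim W+2\dim(\text{secants of }Y)$ against $\dim G(1,r)=2r-2$ shows the contribution from $W$ is accounted for, and the \emph{finiteness} of double points of $\Phi_{|Y}$ is exactly what forces $\dim(W)\leq 2n+1$ (as remarked in the introduction). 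The secant lines meeting $X$ twice are counted by Lemma \ref{lemsec} applied to the pair $(X^{(0)},Y)$ where $X^{(0)}$ is a general zero-dimensional section of $X$: here $a_{0}=\binom{d}{2}$ (using that $X$ is reduced in codimension zero, so a general linear section is $d$ reduced points), the higher $a_{i}$ vanish, and the $b_{i}$ for the quadric $Y$ vanish for $i\geq 1$ with $b_{0}=\binom{\delta}{2}$; matching against $\Delta[v_{2}(Y)]$ delivers $\binom{d}{2}\leq\binom{2c-1}{c-1}$.

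For the lower bound $\alpha\geq 2c-2$, I would argue that if $\alpha$ were smaller than $2c-2$, then the target $\Bbb{P}^{\alpha}$ would be too small: the image $\Phi(Y)\subset\Bbb{P}^{\alpha}$ of the $(c-1)$-dimensional quadric would be forced to acquire more double points than the full projection count $\binom{2c-1}{c-1}$ allows, or equivalently the general projection realizing $\Phi_{|Y}$ requires target dimension at least $2(c-1)=2c-2$ for the double-point locus to stay finite. The equality clause under the stronger hypothesis $\dim(W)\leq 2n$ is where the two-sided control is essential: when $\dim W\leq 2n$, the $W$-contribution to the double points of $\Phi_{|Y}$ drops out entirely for general $Y$, so every double point of $\Phi_{|Y}$ comes from a genuine secant of $X$, and the inequality from Lemma \ref{lemsec} becomes an equality precisely when $\Phi_{|Y}$ achieves the maximal apparent double point count of $v_{2}(Y)$, which happens exactly when the linear projection $\Bbb{P}^{N(r)}\dashrightarrow\Bbb{P}^{\alpha}$ is general of the minimal codimension, i.e. $\alpha=2c-2$.

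\textbf{The main obstacle} I anticipate is the transversality and generality bookkeeping: one must verify that for a general quadric $Y$ of dimension $c-1$ the double points of $\Phi_{|Y}$ are finite and reduced, that the secant lines meeting $X$ are counted without loss by the zero-dimensional reduction $a_{0}=\binom{d}{2}$, and crucially that under $\dim(W)\leq 2n+1$ (resp.\ $\leq 2n$) the lines of $W$ meeting $Y$ twice contribute a controlled (resp.\ zero) amount. Making precise the passage between ``double points of the non-embedding $\Phi_{|Y}$'' and the \emph{apparent} double points $\Delta[v_{2}(Y)]$ of Corollary \ref{corQ}, and ensuring no double points are created or lost when comparing $v_{2}$ with its linear projection $\Phi$, is the technical heart of the argument.
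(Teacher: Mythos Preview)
Your proposal is correct and follows essentially the same route as the paper: choose a general smooth quadric $Y$ of dimension $c-1$ inside a general $\Bbb{P}^{c}\subset\Bbb{P}^{r}$, factor $\Phi_{|Y}$ as $v_{2}$ followed by a linear projection, classify the double points of $\Phi_{|Y}$ via Lemma \ref{lemL} into case $(ii)$ lines (giving exactly $\binom{d}{2}$) and case $(v)$ lines (controlled by $\dim W$), and compare with $\Delta[v_{2}(Y)]=\binom{2c-1}{c-1}$ from Corollary \ref{corQ}. The only refinement to watch is that the paper performs the dimension count not in $G(1,r)$ but in $\Bbb{P}^{r}\times\Bbb{P}^{r}$, via the set $V=\{(Q,Q')\mid \langle Q,Q'\rangle\in W,\ \Phi(Q)=\Phi(Q')\}$ of dimension $\leq \dim W+1$, so that $(Y\times Y)\cap V$ has dimension $\leq 0$ (resp.\ is empty) when $\dim W\leq 2n+1$ (resp.\ $\leq 2n$); your Grassmannian count as stated is off by one because a single line in $W$ carries a $1$-parameter family of identified pairs, which is exactly the technical point you flag as the main obstacle.
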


\begin{proof}
Let $\Phi:\Bbb{P}^r\dashrightarrow\Bbb{P}^{\alpha}$ be the rational
map given by $\Lambda$, and let $v_{2}:\Bbb{P}^r\to\Bbb{P}^{N(r)}$
be the $2$-Veronese embedding of $\Bbb{P}^r$. Then $\Phi =\pi \circ
v_{2},$ as rational maps, where
$\pi:\Bbb{P}^{N(r)}\dashrightarrow\Bbb P^{\alpha}$ is the projection
from the linear subspace $B:=\langle v_2(X)\rangle $ of codimension
$\alpha+1$ in $\Bbb{P}^{N(r)}$ and $N(r):=\binom{r+2}{2}-1$.
Furthermore, we remark that $B\cap v_{2}(\Bbb{P}^{r})=v_{2}(X)$ as
$X$ is defined by $\Lambda$.

Let us choose a generic $c$-dimensional linear space $A\subset
\Bbb{P}^{r}$ such that $A\cap X$ is given by $d$ distinct points
$x_{1},\dots,x_{d}$. This is possible since we assume $X$ is reduced
in codimension zero. Let $Y\subset A$ be a smooth quadric of
dimension $c-1$ such that $Y\cap X=\emptyset$. Then $\Phi_{|Y}$ is a
regular map. We claim that $\Phi _{|Y}:Y\to\Phi(Y)$ is a finite
morphism. If $\Phi_{|Y}$ has a positive dimensional fibre over a
point $R\in \Bbb{P}^{\alpha }$, then we get also a positive
dimensional fibre of $\pi _{|v_{2}(Y)}$ over the same point $R$, but
this fibre is contained in $\langle B,R\rangle $, then it intersects
$B$. Hence we would have $v_{2}(Y)\cap B\neq \emptyset$, which is a
contradiction because $\emptyset =X\cap Y=v_{2}(X)\cap
v_{2}(Y)=B\cap v_{2}(\Bbb{P}^{r})\cap v_{2}(Y)=B\cap v_{2}(Y)$. This
proves the claim.

We now claim that $\Phi _{|Y}$ cannot have infinitely many fibres
containing two or more points. In fact, let $Q,Q^{\prime }$ be two
points of $Y$ such that $\Phi (Q)=\Phi (Q^{\prime })$ and let
$L\subset A$ be the line $\langle Q,Q^{\prime }\rangle $. It follows
from Lemma \ref{lemL} that either case $(ii)$ or case $(v)$ holds
for $L$. The common secant lines to $X$ and $Y$ coincide with the
secant lines to $X\cap A$. Since $X$ is defined by quadrics $X\cap
A$ does not contain three points on a line, so the number of common
secant lines to $X$ and $Y$ is equal to $\binom{d}{2}$. We now prove
that for generic $Y\subset A\simeq\Bbb{P}^{c}$ there are at most a
finite number of secant lines $\langle Q,Q^{\prime }\rangle $, with
$Q,Q^{\prime }\in Y$ and $\Phi (Q)=\Phi (Q^{\prime })$, for which
Lemma \ref{lemL} $(v)$ holds. It is here where we strongly use the
assumption $\dim(W)\leq 2n+1$. Consider the rational map $\psi
:\Bbb{P}^{r}\times \Bbb{P}^{r}\dasharrow G(1,r)$ given by $\psi
(Q,Q^{\prime })=\langle Q,Q^{\prime }\rangle $. Let us define
$V:=\{(Q,Q^{\prime })\in \Bbb{P}^{r}\times \Bbb{P}^{r}\mid\psi
(Q,Q^{\prime })\in W,\, \Phi (Q)=\Phi (Q^{\prime })\}$. Then
$\dim(V)\leq 2n+2$ as $\dim (W)\leq 2n+1.$ Therefore, for generic
$Y$, in $\Bbb{P}^{r}\times \Bbb{P}^{r}$ we have $\dim[(Y\times
Y)\cap V]\leq 0$, since $\dim (V)+\dim (Y\times Y)\leq
2n+2+2c-2=2r=\dim (\Bbb{P}^{r}\times \Bbb{P}^{r})$. This proves the
claim.

It follows that $\Phi(Y)\subset\Bbb P^{\alpha}$ has only a finite
number $\eta(Y)$ of singular points. In particular, $B$ intersects the
secant variety of $v_2(Y)$ in $\Bbb P^{N(r)}$ in a
finite number of points. Since the dimension of the secant variety
of $v_2(Y)$ in $\Bbb P^{N(r)}$ is the expected one,
$2\dim[v_2(Y)]+1=2c-1$, we get
$$2c-1=\dim(\sec[v_2(Y)])\leq\cd(B)=\alpha+1$$
whence $\alpha\geq 2c-2$, proving the first statement.

Moreover, the number of singular points of $\pi [v_{2}(Y)]$ is
bounded by $\Delta [v_{2}(Y)]$ so we have
\begin{center}
$\binom{d}{2}\leq\eta(Y)\leq\Delta [v_{2}(Y)]=\binom{2c-1}{c-1}$,
\end{center}
where the equality follows from Corollary \ref{corQ}.

Furthermore, if $\dim(W)\leq 2n$ then $\dim(V)\leq 2n+1$ and hence no
double point of $\Phi(Y)$ comes from a line $L$ for which Lemma
\ref{lemL} (v) holds, arguing as before. Therefore
$\binom{d}{2}=\eta(Y)$. On the other hand, $\eta(Y)=\Delta
[v_{2}(Y)]$ if and only if $\cd(B)=\dim(\sec[v_2(Y)])$, that is, if
and only if $\alpha=2c-2$.
\end{proof}

\begin{remark}
The bound $\binom{d}{2}\leq \binom{2c-1}{c-1}$ is better than the
obvious bound $d\leq 2^c$. In fact, a simple calculation shows that
our bound is given asymptotically by $d\leq
\frac{2^c}{\sqrt[4]{\pi{c}}}$.
\end{remark}

\begin{remark}\label{rem:zak}
If $X\subset\Bbb P^r$ is a non-degenerate integral subvariety (resp.
a finite set of points in general position) defined by quadrics then
it follows from \cite[Corollary 5.4]{z} that $c\leq \alpha+1
\leq\binom{c+1}{2}$. On the one hand, if $c\leq\alpha+1 < 2c-1$ then
$\dim(W)>2n+1$ by Theorem \ref{teofond}, so our method say nothing
about $d$. On the other hand, if $\binom{c}{2} < \alpha+1
\leq\binom{c+1}{2}$ then $d\leq 2c$, and $\alpha+1=\binom{c+1}{2}$
if and only if $d=c+1$ and $X\subset\Bbb P^r$ is a variety of
minimal degree (i.e. either a cone over the Veronese surface
$v_2(\Bbb P^2)\subset\Bbb P^5$ or a rational normal scroll) (cf.
\cite[Proposition 5.6, Corollary 5.8 and Remark 5.9]{z}).
\end{remark}

\begin{remark}
In view of Remark \ref{rem:zak}, our result is more relevant in the
wide range $2c-2\leq \alpha < \binom{c}{2}$. Furthermore, looking at
the proof of Theorem \ref{teofond} one observes that the closer is
$\alpha$ to $2c-2$, the better should be the bound $\binom{d}{2}\leq
\binom{2c-1}{c-1}$. It would be very interesting to find, under
similar hypotheses, a bound on the degree of $X$ involving not only
$c$ but also $\alpha$. For instance, under the assumptions of
Theorem \ref{teofond} it can be shown that
$\binom{d}{2}\leq\binom{2c-1}{c-1} + 2c-2 - \alpha$ if moreover
$\Phi(Y)$ is non-degenerate in $\Bbb P^{\alpha}$. More generally,
$\binom{d}{2}\leq \binom{2c-1}{c-1} + 2c-2 - \beta$, where $\beta$
denotes the dimension of the linear span of $\Phi(Y)$ in $\Bbb
P^{\alpha}$. However, these bounds are probably far from being
optimal.
\end{remark}

\begin{remark}\label{rem:egh}
A bound on the degree of a zero dimensional scheme defined by
quadrics was conjectured in \cite[Conjecture $(II_{m,r})$]{e-g-h}.
In the particular case $2c-2=\alpha$, where our bound turns out to
be stronger, Conjecture $(II_{m,r})$ predicts $d\leq 2^{c-1}+1$. We
would like to remark that we actually get the better bound
$\binom{d}{2}\leq \binom{2c-1}{c-1}$ under the extra assumption
$\dim(W)\leq 1$.
\end{remark}

%
%

In Theorem \ref{teofond}, we assume $X$ is reduced in codimension
zero and $\dim(W)\leq 2n+1$. This is crucial to prove that
$\Phi_{|Y}:Y\to\Phi(Y)$ has only finitely many double points. Let us
see that these hypotheses cannot be dropped in the following two
remarks.

\begin{remark}
In the proof of Theorem \ref{teofond}, we assume $X$ is reduced in
codimension zero to ensure that $X\cap A$ consists of $d$ different
points. In this way, we have finitely many common secant lines to
$X$ and $Y$, whence $\Phi(Y)$ has finitely many double points coming
from lines as in Lemma \ref{lemL} (ii). This is no longer true if
$X\cap A$ is non-reduced, as the following example shows. Consider
$\Lambda\subset H^0(\Bbb P^c,\O_{\Bbb P^c}(2))$ generated by
$X_iX_j$ for $1\leq i\leq j\leq c$. Then $X\subset\Bbb P^c$ is
supported at the point $(1:0:\dots:0)$. In this case, every line
passing through $(1:0:\dots:0)$ is contracted by $\Phi(Y)$. Hence,
for every smooth quadric $Y$ in $\Bbb P^c$ of dimension $c-1$ not
passing through $(1:0:\dots:0)$ the morphism $\Phi_{|Y}:Y\to\Phi(Y)$
is a double covering of $\Phi(Y)=v_2(\Bbb P^{c-1})\subset\Bbb
P^{N(c-1)}$ by $Y$.
\end{remark}

\begin{remark}\label{remW}
On the other hand, the assumption on $W$ is used to guarantee that
$\Phi(Y)$ has finitely many double points coming from lines as in
Lemma \ref{lemL} (v). Unfortunately, this condition cannot be
relaxed. Let us consider the following example. Choose coordinates
$(x:y:z:u:w)$ in $\Bbb{P}^{4}$ and fix the hyperplane $w=0.$ Let
$F_{0},F_{1,}F_{2}$ be three generic degree two forms in
$\Bbb{C}[x,y,z,u]$ such that the intersection of the corresponding
quadrics is given by $8$ distinct points in the hyperplane $w=0 $.
Let $\Phi :\Bbb{P}^{4}\dasharrow \Bbb{P}^{6}$ be the rational map
given by $(F_{0}:F_{1}:F_{2}:wx:wy:wz:wu)$ and let $X$ be the base
locus of the linear systems $\Lambda $ of quadrics giving $\Phi$.
$X$ is the union of the $8$ points and $(0:0:0:0:1)$, so that $r=4$,
$\alpha =6$, $d=9$, $n=0$ and $c=4$. It is easy to see that $\dim
(Z)=4$, where $Z:=\Phi(\Bbb{P}^{4})$, and the fibre over any point
of $Z$ is a point with the exception of points $(h:k:l:0:0:0:0)$
whose fibres are positive dimensional, and the generic fibre is an
elliptic smooth quartic of $\Bbb{P}^{3}$, intersection of two
quadrics. Here $\dim (W)\geq 2$ by Lemma \ref{lemW}, and Theorem
\ref{teofond} does not hold since $36=\binom{9}{2}>\binom{7}{3}=35$.
\end{remark}

\begin{remark}
\label{remZ} In the proof of Theorem \ref{teofond} we showed that
$\Phi_{|Y}:Y\to\Phi(Y)$ is finite. Moreover, since $\dim(W)\leq
2n+1$ and $A:=\langle Y \rangle\subset\p^r$ is generic we deduce
that $\dim(W\cap\g(1,A))\leq 1$. So it easily follows from Lemma
\ref{lemW} that $\Phi_{|A}:A\dashrightarrow\Phi(A)$ is birational
(in particular, if $n=0$ the assumptions of Theorem \ref{teofond}
imply that $\Phi:\Bbb P^r\dashrightarrow\Bbb P^{\alpha}$ is
birational onto its image). Hence $\dim(Z)\geq\dim(\Phi(A))\geq c$,
where $Z:=\Phi(\Bbb P^r)$. As $\dim(Z)=\rho-1$, where $\rho$ is the
generic rank of the Jacobian matrix of $\Lambda$, it follows that a
necessary condition to get $\dim(W)\leq 2n+1$ is $\rho\geq c+1$.
Note that, in concrete examples, to determine $\rho$ is easier than
to estimate the dimension of $W$.


\end{remark}





In practice, it could be difficult to compute the dimension of $W$.
However, as we pointed out in the introduction, we have
$W=\emptyset$ as soon as condition $K_2$ holds.

\begin{corollary}
\label{corK2} Let $X\subset\Bbb{P}^{r}$ be a scheme of degree $d$
and dimension $n$ defined by a linear system of quadrics $\Lambda$.
Assume that $X$ is reduced in codimension zero. Let
$\alpha:=\dim(\Lambda)-1$ and let $c:=r-n\geq 2$. If $X$ satisfies
condition $K_{2}$ then $\alpha \geq 2c-2$ and $\binom{d}{2}\leq
\binom{2c-1}{c-1}$. Furthermore, $\binom{d}{2}=\binom{2c-1}{c-1}$ if
and only if $\alpha=2c-2$.
\end{corollary}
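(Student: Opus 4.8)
The plan is to derive this corollary as an immediate application of Theorem \ref{teofond}, so the only genuine work is to translate the algebraic hypothesis $K_2$ into the geometric hypothesis on $\dim(W)$ that Theorem \ref{teofond} demands. First I would unwind the meaning of condition $K_2$: following Vermeire \cite{v}, it asserts that the trivial (Koszul) relations among the quadrics $f_0,\dots,f_\alpha$ generating $\Lambda$ are generated by linear syzygies. As recalled in the introduction, the geometric import of this condition is that the closure of every fibre of the rational map $\Phi:\Bbb{P}^r\dashrightarrow\Bbb{P}^\alpha$ is a linear subspace of $\Bbb{P}^r$.

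Next I would invoke Corollary \ref{corlin}, which characterizes $W=\emptyset$ as equivalent to all fibres of $\Phi$ being linear spaces. Combining this with the previous step, condition $K_2$ forces $W=\emptyset$, and hence $\dim(W)\leq 2n$ (indeed $\dim(\emptyset)=-\infty$, and since $n\geq 0$ the weaker bound $\dim(W)\leq 2n+1$ holds as well). At this point I would simply feed the hypotheses into Theorem \ref{teofond}. Because $\dim(W)\leq 2n+1$, its first assertion yields $\alpha\geq 2c-2$ together with $\binom{d}{2}\leq\binom{2c-1}{c-1}$; because in fact $\dim(W)\leq 2n$, the refined assertion applies and gives $\binom{d}{2}=\binom{2c-1}{c-1}$ if and only if $\alpha=2c-2$. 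This reproduces the statement of the corollary verbatim.

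The main, and essentially the only, obstacle is the first step: rigorously certifying that $K_2$ implies that every fibre of $\Phi$ is linear, so that Corollary \ref{corlin} may be invoked. Once the hypothesis has been converted into $W=\emptyset$, the numerical conclusions require no new geometric estimate and follow mechanically from Theorem \ref{teofond}; all of the content lies in relating the syzygy-theoretic condition $K_2$ to the linearity of the fibres of $\Phi$.
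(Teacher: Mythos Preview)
Your proposal is correct and lands on the same reduction as the paper: show $W=\emptyset$ from $K_2$, then read off everything from Theorem \ref{teofond}. The only difference is in how $W=\emptyset$ is obtained. You go through the global statement ``$K_2$ forces every fibre of $\Phi$ to be linear'' and then appeal to Corollary \ref{corlin}. The paper instead argues line by line: $K_2$ is inherited under restriction to any line $L\subset\Bbb{P}^r$ (citing \cite[Lemma 4.2]{v}), so if $L\cap X=\emptyset$ the restricted system $\Lambda_{|L}$ is the complete linear system of conics on $L$, which is exactly case (iv) of Lemma \ref{lemL}; hence case (v) never occurs and $W=\emptyset$. This is a slightly more self-contained route, since it avoids the stronger assertion about the global fibre structure and only needs the behaviour of $\Phi$ on lines. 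Either way the remaining numerical conclusions follow mechanically from Theorem \ref{teofond}, exactly as you say.
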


\begin{proof}
If $X$, or $\Lambda$, satisfies condition $K_2$ then the restriction
$\Lambda_{|L}$ to a line $L\subset\Bbb{P}^{r}$ also satisfies $K_2$
(see \cite[Lemma 4.2]{v})). Therefore, if $L\cap X=\emptyset$ then
$\Lambda_{|L}$ is given by the complete linear system of quadrics,
so that $W=\emptyset$. Hence we can apply Theorem \ref{teofond}.
\end{proof}



\begin{remark}
According to Theorem \ref{teofond}, the inequality $\alpha+1\geq
2c-1$ is a necessary condition to have $\dim(W)\leq 2n+1$. As far as
we know, this bound on the number of quadrics defining $X$ was not
known even for schemes satisfying property $N_2$ (cf.
\cite[Corollary 3.7 and Remark 3.9]{h-k}).
\end{remark}

\begin{example}
Generic linear sections of (a cone over) either
$\Bbb{P}^{1}\times\Bbb{P}^{2}\subset\Bbb{P}^{5}$ or
$G(1,4)\subset\Bbb{P}^{9}$ are examples of varieties satisfying
$N_2$ and the equalities obtained in Theorem \ref{teofond} and
Corollary \ref{corK2} with $(d,c)$ equal to either $(3,2)$ or
$(5,3)$, respectively (cf. Remark \ref{rem:dioph}).
\end{example}

\begin{example}\label{ex:8points}
Let $X$ be the scheme given by $8$ points in general position in
$\Bbb{P}^{4}$. $X$ is the base locus of a generic $6$-dimensional
linear system of quadrics $\Lambda$, and property $N_{1}$ holds (see
\cite[Theorem 1]{gl}). Here $c=4$, $\alpha=6$ and $d=8$. Then
$\alpha=2c-2$ but $\binom{8}{2}\neq\binom{7}{3}$, whence $X$ does
not satisfy property $N_2$ by Corollary \ref{corK2}. This shows an
example where \cite[Theorem 1]{gl} is sharp for $p=2$.

The same computation shows that a curve of genus three embedded in
$\p^5$ with degree $8$ does not satisfy property $N_2$. In that
case, it easily follows also from \cite[Theorem 2]{gl}.
\end{example}

\begin{remark}\label{rem:dioph}
The set of pairs of integers $(d,c)$ satisfying the Diophantine
equation $\binom{d}{2}=\binom{2c-1}{c-1}$ is not known (see for
instance \cite{dw}). The pairs $(3,2)$, $(5,3)$ and $(221,9)$ are
solutions, but there could be other ones. However, if the generic
curve section of $X$ is smooth and integral, $(3,2)$ and $(5,3)$ are
the only possibilities for equality in Theorem \ref{teofond} and
Corollary \ref{corK2}, thanks to the following:
\end{remark}

\begin{theorem}\label{teofondter}
In Theorem \ref{teofond}, if $\dim(W)\leq 2n$,
$\binom{d}{2}=\binom{2c-1}{c-1}$ and, moreover, $X$ is smooth and
integral in codimension one then either $d=3$, $c=2$ and $g=0$ or
$d=5$, $c=3$ and $g=1$.
\end{theorem}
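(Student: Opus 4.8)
The plan is to exploit the equality case of Theorem \ref{teofond} together with the added smoothness hypothesis in codimension one. Under the standing assumptions we have $\dim(W)\leq 2n$ and $\binom{d}{2}=\binom{2c-1}{c-1}$, so Theorem \ref{teofond} forces $\alpha=2c-2$ and, more importantly, the chain of inequalities in its proof becomes a chain of equalities: every one of the $\binom{2c-1}{c-1}$ apparent double points $\Delta[v_2(Y)]$ of the generic quadric $Y\subset A$ actually produces a genuine double point of $\Phi_{|Y}$, and each such double point comes from a common secant line to $X\cap A$ (a line as in Lemma \ref{lemL} (ii)) rather than from a line as in (v). The geometric content is therefore that \emph{every} secant line to $v_2(Y)$ that meets $B=\langle v_2(X)\rangle$ does so in a point lying on $v_2(Y)$'s secant locus accounted for by $X\cap A$, with no excess. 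I would first pass to a general curve section: since $X$ is smooth and integral in codimension one, a general linear section $X'=X\cap\Lambda_0$ of codimension $n-1$ is a smooth integral curve of degree $d$ and sectional genus $g$, sitting in a $\Bbb P^{c+1}$, and it is again defined by quadrics and inherits the equality $\binom{d}{2}=\binom{2c-1}{c-1}$ with the same codimension $c$.

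The key step is to reinterpret the equality $\binom{d}{2}=\binom{2c-1}{c-1}$ for this curve in terms of classical invariants. First I would observe that $\binom{d}{2}$ is the number of apparent double points of $X'$ under a general projection to $\Bbb P^2$, i.e.\ $\binom{d}{2}=a_0$ in the notation of Theorem \ref{formula}, while the right-hand side $\binom{2c-1}{c-1}$ is exactly $\Delta[v_2(Y)]$ for a smooth quadric $Y$ of dimension $c-1$ (Corollary \ref{corQ}). The equality $\eta(Y)=\Delta[v_2(Y)]$ says that $B$ meets $\sec[v_2(Y)]$ transversally in the maximal possible finite number of points and that all of these correspond to honest secants of the $d$ points $X\cap A$. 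I would then feed this into the double-point formula for the curve: applying \cite[Theorem 3.4]{ps} (as in the proof of Theorem \ref{formula}) to express $a_0=\binom{d}{2}$ in terms of $d$, $g$ and $c$ gives a Diophantine relation among these invariants. The expectation is that the equality case pins down a relation of the form that the number of apparent double points of the projected curve forces $X'$ to lie on a surface of minimal degree, so that $X'$ is a curve of \emph{small} genus inside a variety of minimal degree in $\Bbb P^{c+1}$.

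The heart of the argument, and the main obstacle, is to rule out all pairs $(d,c)$ except $(3,2)$ and $(5,3)$. Here I would argue that the equality in Theorem \ref{teofond} combined with smoothness forces $\Phi(Y)$, and hence $\Phi$ restricted to the $\Bbb P^{c+1}$ containing $X'$, to behave like the $2$-Veronese of a quadric with no residual degeneracy; this should bound the sectional genus $g$ very tightly and tie $d$ to $c$ through a variety-of-minimal-degree classification. Concretely, I expect to show that equality in the secant-line count forces $g\leq 1$: the curve $X'$ must be either rational ($g=0$) or an elliptic ($g=1$) curve lying on a rational normal scroll or a Veronese cone, and then a direct check of which such curves satisfy $\binom{d}{2}=\binom{2c-1}{c-1}$ isolates the two solutions $(d,c,g)=(3,2,0)$ and $(5,3,1)$. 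The delicate point is excluding the sporadic arithmetic solutions of the Diophantine equation (such as $(221,9)$, cf.\ Remark \ref{rem:dioph}) on geometric grounds, i.e.\ proving that no smooth integral curve realizing those numerical values can satisfy the degree/genus constraints imposed by the equality case; this is where the smoothness and integrality in codimension one do the essential work, since without them the purely numerical equation admits extra solutions.
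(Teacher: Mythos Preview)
Your proposal has a genuine gap: you never produce a relation that actually involves the sectional genus $g$, so you have no mechanism to rule out the large solutions of $\binom{d}{2}=\binom{2c-1}{c-1}$. You write that ``applying \cite[Theorem 3.4]{ps} \dots\ to express $a_0=\binom{d}{2}$ in terms of $d$, $g$ and $c$ gives a Diophantine relation'', but $a_0$ is just $\binom{d}{2}$; it carries no information about $g$. (Also, $\binom{d}{2}$ is not the number of apparent double points of the curve $X'$ projected to $\p^2$; that number is $a_1=\binom{d-1}{2}-g$.) Staying with a quadric $Y\subset A\simeq\p^c$ as in Theorem \ref{teofond} only reproduces the hypothesis $\binom{d}{2}=\binom{2c-1}{c-1}$, and from there your plan to force $g\leq 1$ via ``variety of minimal degree'' considerations is asserted rather than argued; in particular you give no way to exclude the pair $(d,c)=(221,9)$.

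The paper's proof supplies exactly the missing idea: take $Y$ of dimension $c-1$ inside a \emph{larger} linear space $A\simeq\p^{c+1}$, so that $X\cap A$ is now the smooth integral curve section. Then the common-secant count of Lemma \ref{lemsec} and the formula of Theorem \ref{formula} both acquire a $b_1$-term, and the equality $\eta(Y)=\Delta[v_2(Y)]$ (which follows as before from $\alpha=2c-2$ and $\dim(W)\le 2n$) yields
\[
\binom{d}{2}b_0+\Bigl[\binom{d-1}{2}-g\Bigr]b_1=\binom{2c-1}{c-1}b_0+\binom{2c-1}{c-2}b_1.
\]
Cancelling the $b_0$-terms gives the new relation $\binom{d-1}{2}-g=\binom{2c-1}{c-2}$, i.e.\ $g$ is pinned down by $(d,c)$. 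Castelnuovo's bound for a curve in $\p^{c+1}$ then converts this into the polynomial inequality $d\leq\frac{3c^2+2c-1}{c-1}$, which is incompatible with $\binom{d}{2}=\binom{2c-1}{c-1}$ for $c\geq 6$; a direct check for $c\leq 5$ leaves only $(d,c,g)=(3,2,0)$ and $(5,3,1)$. This is how the sporadic arithmetic solutions are eliminated.
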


\begin{proof}
We consider a generic smooth integral subvariety
$Y\subset\Bbb{P}^{r}$ of dimension $c-1$, disjoint from $X$, such
that $\langle Y\rangle  =:A\simeq\Bbb{P}^{c+1}$. Consider
$\Phi(Y)\subset\Bbb P^{\alpha}$. The double points of $\Phi(Y)$
correspond to lines as in cases (ii) and (v) of Lemma \ref{lemL}.
Since $\dim(W)\leq 2n$, one gets as in Theorem \ref{teofond} that
actually no double point of $\Phi(Y)$ comes from a line as in case
(v). Hence $\Phi(Y)$ has only a finite number $\eta(Y)$ of double
points, and $\eta(Y)$ is the number of common secant lines to $X$
and $Y$, or equivalently, the number of common secant lines to
$X\cap A$ and $Y$. As $Y\subset A$ is generic and $X\cap A$ is a
smooth integral curve by hypothesis, we can compute the number of
common secant lines to $X\cap A$ and $Y$ by using Lemma
\ref{lemsec}. On the other hand, since
$\binom{d}{2}=\binom{2c-1}{c-1}$ it follows from Theorem
\ref{teofond} that $\alpha=2c-2$. Therefore
$\eta(Y)=\Delta[v_2(Y)]$, and the second number can be computed by
Theorem \ref{formula}. As $a_0=\binom{d}{2}$, $a_1=\binom{d-1}{2}-g$
and $b_i=0$ for $i\geq 2$ because the generic $i$-dimensional linear
section of $Y$ has no apparent double points, these two computations
yield
\begin{center}
$\binom{d}{2}b_{0}+[\binom{d-1}{2}-g]b_{1}=\eta(Y)=\binom{2c-1}{c-1}b_0+\binom{2c-1}{c-2}b_1$.
\end{center}

Since $\binom{d}{2}=\binom{2c-1}{c-1}$ we deduce
$\binom{d-1}{2}-g=\binom{2c-1}{c-2}$. By using Castelnuovo's
inequality in $\Bbb{P}^{c+1}$ we have
\begin{center}
$\frac{1}{2}(d^{2}-3d+2)-\binom{2c-1}{c-2}=g \leq \frac{d^{2}-2d+1}{2c}+\frac{d-1}{2}$
\end{center}
(see \cite{acgh} p. 116). As
\begin{center}
$\binom{2c-1}{c-2}=\frac{c-1}{c+1}\binom{2c-1}{c-1}=\frac{c-1}{c+1}\binom{d}{2}$,
\end{center}
we get
\begin{center}
$\frac{1}{2}(d^{2}-3d+2)-\frac{c-1}{c+1}\binom{d}{2}\leq
\frac{d^{2}-2d+1}{2c}+\frac{d-1}{2},$
\end{center}
i.e. $d-2-\frac{c-1}{c+1}d\leq\frac{d-1}{c}+1$, and, equivalently,
$\frac{2d-2c-2}{c+1}\leq\frac{d+c-1}{c}$. So we get
$d\leq\frac{3c^2+2c-1}{c-1}$. From the relation
$\binom{2c-1}{c-1}=\binom{d}{2}$ we have
\begin{center}
$2\binom{2c-1}{c-1} \leq \frac{3c^2+2c-1}{c-1}(\frac{3c^2+2c-1}{c-1}-1)$,
\end{center}
and it is easy to see that this last inequality cannot be satisfied
for $c\geq 6$. If $c\leq 5$, taking care of the relation
$\binom{d}{2}=\binom{2c-1}{c-1}$, then the only possibilities are
$d=3$, $c=2$, $g=0$ and $d=5$, $c=3$, $g=1$.
\end{proof}

\begin{remark}\label{rem:smooth in codimension one}
If $X$ is smooth and integral in codimension one and $\dim (W)\leq
2n+1$, then Lemma \ref{lemsec} and Theorem \ref{formula} yield
\begin{center}
$a_{0}b_{0}+a_{1}b_{1}\leq \eta (Y)\leq\binom{2c-1}{c-1}b_{0}+\binom{2c-1}{c-2}b_{1}$.
\end{center}
Consider $Y:=Y_e\subset\Bbb{P}^{c+1}$ the complete intersection of
two hypersurfaces of degree $e$. Then $b_0(Y_e)=\binom{e^2}{2}$, and
$b_1(Y_e)=\binom{e^2-1}{2}-(e^2(e-2)+1)$ since the sectional genus
of $Y_e$ is $e^2(e-2)+1$. Therefore, for every $e\geq 2$ we have the
relation
$a_{0}\binom{e^2}{2}+a_{1}[\binom{e^2-1}{2}-(e^2(e-2)+1)]\leq
\binom{2c-1}{c-1}\binom{e^2}{2}+\binom{2c-1}{c-2}[\binom{e^2-1}{2}-(e^2(e-2)+1)]$.
Dividing by $\binom{e^2}{2}$ we get $a_{0}+a_{1}f(e)\leq
\binom{2c-1}{c-1}+\binom{2c-1}{c-2}f(e)$, where
$f(e)=\frac{\binom{e^2-1}{2}-(e^2(e-2)+1)}{\binom{e^2}{2}}$. Since
$\lim_{e\to\infty}f(e)=1$ we get $a_{0}+a_{1}\leq
\binom{2c-1}{c-1}+\binom{2c-1}{c-2}$. So we obtain the bound
\begin{center}
$\binom{d}{2}+\binom{d-1}{2}-g\leq\binom{2c-1}{c-1}+\binom{2c-1}{c-2},$
\end{center}
where $g$ denotes the sectional genus of $X\subset\p^r$.
\end{remark}

\begin{remark}
This bound is slightly better than the bound obtained in
Theorem \ref{teofond}. Furthermore, if $X$ is smooth and integral in
higher codimensions the same argument produces (a little bit)
stronger and stronger bounds involving numerical characters of the
corresponding linear section.
\end{remark}

\begin{remark}
A priori, the choice of $Y$ in Theorem \ref{teofond} and Remark
\ref{rem:smooth in codimension one} might seem rather arbitrary.
However, in Theorem \ref{teofond} the same bound is obtained by
taking a hypersurface $Y\subset\p^c$ of any degree. On the other
hand, in Remark \ref{rem:smooth in codimension one} it is natural to
consider a complete intersection in view of Hartshorne's Conjecture,
and the bound does not change if we choose a complete intersection
of hypersurfaces of different degrees. Furthermore, some
computations where $Y\subset\p^{c+1}$ is a non complete intersection
codimension two subvariety, for $c+1\leq 5$, suggest that it is not
possible to improve the bound of Remark \ref{rem:smooth in
codimension one} with our method.
\end{remark}

\begin{corollary}\label{corfin}
In Corollary \ref{corK2}, if moreover $X$ is smooth and integral in
codimension one and $\binom{d}{2}=\binom{2c-1}{c-1}$, then either
$d=3$, $c=2$ and $g=0$ or $d=5$, $c=3$ and $g=1$.
\end{corollary}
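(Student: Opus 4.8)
The plan is to deduce this corollary immediately from Theorem \ref{teofondter}, which already carries out all of the substantive work (the interplay of Lemma \ref{lemsec}, Theorem \ref{formula}, and the Castelnuovo bound). The only discrepancy between the two statements lies in their hypotheses: Theorem \ref{teofondter} is phrased under the assumption $\dim(W)\leq 2n$, whereas here we are given condition $K_2$. Thus the first and essentially only step is to verify that $K_2$ forces $W=\emptyset$, which in particular yields $\dim(W)\leq 2n$.

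To see this, I would reproduce the mechanism already used to pass from Theorem \ref{teofond} to Corollary \ref{corK2}. By \cite[Lemma 4.2]{v}, condition $K_2$ is inherited by the restriction $\Lambda_{|L}$ to any line $L\subset\Bbb{P}^r$. Hence, whenever $L\cap X=\emptyset$, the system $\Lambda_{|L}$ is the complete linear system of quadrics on $L$, so that $\Phi_{|L}$ is a Veronese embedding and never a double covering; that is, case $(v)$ of Lemma \ref{lemL} does not occur for such a line, so $\mathcal{W}=\emptyset$ and therefore $W=\emptyset$. Equivalently, one may simply invoke Corollary \ref{corlin}, since under $K_2$ every fibre of $\Phi$ is a linear space.

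With $W=\emptyset$ in hand, all the hypotheses of Theorem \ref{teofondter} are in place: $\dim(W)\leq 2n$ holds trivially, the equality $\binom{d}{2}=\binom{2c-1}{c-1}$ is assumed, and $X$ is smooth and integral in codimension one by hypothesis. Applying Theorem \ref{teofondter} then produces the two stated possibilities, $d=3$, $c=2$, $g=0$ and $d=5$, $c=3$, $g=1$, completing the argument. There is no genuine obstacle here; the whole Diophantine and Castelnuovo-type analysis has been carried out inside Theorem \ref{teofondter}, and the only point requiring attention is the bookkeeping observation that $K_2$ annihilates $W$, which is precisely the reduction already exploited in the proof of Corollary \ref{corK2}.
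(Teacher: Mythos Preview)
Your proposal is correct and matches the paper's own proof, which is the one-line remark that the result is an immediate consequence of Theorem \ref{teofondter}. You have simply spelled out explicitly the passage $K_2\Rightarrow W=\emptyset\Rightarrow\dim(W)\leq 2n$ (already used in Corollary \ref{corK2}) before invoking Theorem \ref{teofondter}, so there is no substantive difference.
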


\begin{proof}
This is an immediate consequence of Theorem \ref{teofondter}.
\end{proof}

Furthermore, in view of \cite{h-k}, Corollaries \ref{corK2} and
\ref{corfin} yield a stronger result when $X\subset\p^r$ satisfies
property $N_p$ or $N_{2,p}$ for $p\geq 3$:

\begin{proof}[Proof of Corollary \ref{cor:Np}]
If $p\geq 3$, let $r':=r+2-p$ and let $X'\subset\p^{r'}$ denote the
inner projection from $p-2$ general smooth points of $X\subset\p^r$.
Let $d':=d+2-p$ and $c':=c+2-p$ denote, respectively, the degree and
codimension of $X'\subset\p^{r'}$. If $X\subset\p^r$ satisfies
property $N_p$ or $N_{2,p}$ then
$h^0(\Bbb{P}^{r},\mathcal{I}_X(2))\geq cp-\binom{p}{2}$ by
\cite[Corollary 3.7]{h-k}, and $X'\subset\p^{r'}$ satisfies property
$N_{2,2}$ by \cite[Corollary 3.3]{h-k}. Therefore
$\binom{d'}{2}\leq\binom{2c'-1}{c'-1}$ by Corollary \ref{corK2}, and
hence $\binom{d+2-p}{2}\leq\binom{2c+3-2p}{c+1-p}$. Furthermore,
$\binom{d+2-p}{2}=\binom{2c+3-2p}{c+1-p}$ if and only if
$h^0(\Bbb{P}^{r'},\mathcal{I}_{X'}(2))=2c'-1$ by Corollary
\ref{corK2}. Note that this happens if and only if
$h^0(\Bbb{P}^{r},\mathcal{I}_X(2))=cp-\binom{p}{2}$ (cf.
\cite[Proposition 3.5]{h-k}). This proves (i). Furthermore, if
$X\subset\p^r$ is smooth and integral in codimension one we claim
that also $X'\subset\p^{r'}$ is smooth and integral in codimension
one. By induction, it is enough to prove the claim for the inner
projection of $X\subset\p^r$ from a single point. Let $x\in X$ be a
smooth general point and let $C\subset\p^{c+1}$ denote a smooth
integral curve section of $X\subset\p^r$ passing through $x$. Let
$C'\subset\p^{c'+1}$ denote the inner projection of
$C\subset\p^{c+1}$ from $x$. Then $C'\subset\p^{c'+1}$ is a smooth
integral curve section of $X'\subset\p^{r'}$ isomorphic to
$C\subset\p^{c+1}$ since there are no trisecant lines to
$C\subset\p^{c+1}$ passing through $x$, as $X\subset\p^r$ is defined
by quadrics. Therefore if $\binom{d+2-p}{2}=\binom{2c+3-2p}{c+1-p}$,
i.e. if $\binom{d'}{2}=\binom{2c'-1}{c'-1}$, then either $d'=3$,
$c'=2$ and $g'=0$ or $d'=5$, $c'=3$ and $g'=1$ by Corollary
\ref{corfin}, that is, either $d=p+1$, $c=p$ and $g=0$ or $d=p+3$,
$c=p+1$ and $g=1$. This proves (ii).
\end{proof}

\begin{remark}
On the other hand, as it is well known, both rational normal curves
of degree $p+1$ and elliptic normal curves of degree $p+3$ satisfy
property $N_p$ (see, for instance, \cite{g} or \cite{gl}).
\end{remark}

\begin{remark}
The bounds obtained in Corollary \ref{cor:Np} improve those of
Theorem \ref{thm:main} and Corollary \ref{cor:main}. In fact, $d$ is
bounded asymptotically by $\frac{2^{c+2-p}}{\sqrt[4]{\pi(c+2-p)}}$.
\end{remark}

According to Remark \ref{rem:dioph}, it would be interesting to
answer the following:

\begin{question}
Is it possible to obtain $221$ points in $\Bbb{P}^{9}$ as the base
locus of a linear system of quadrics satisfying property $N_2$,
$N_{2,2}$, $K_2$ or $\dim(W)\leq 0$?
\end{question}

\textbf{Acknowledgements.} This research was begun while the second
author was visiting the Department of Mathematics at Universit\`a
degli Studi di Milano during the spring of 2008. He wishes to thank
Antonio Lanteri for his warm hospitality and for providing excellent
working conditions. We also thank F.L. Zak for pointing out the
paper \cite{e-g-h} quoted in Remark \ref{rem:egh}, as well as for
letting us know about \cite{h-k}.


%
%
%

\end{document}